\newcommand{\lie}{\mathfrak}
\newcommand{\g}{\lie{g}} 
\newcommand{\h}{\lie{h}} 
\renewcommand{\b}{\lie{b}}
\renewcommand{\i}{\lie{i}} 
\newcommand{\Z}{\mathbb{Z}}
\newcommand{\C}{\mathbb{C}} 
\newcommand{\curs}{\mathcal}
\newcommand{\po}{\preccurlyeq}
\newcommand{\propref}[1]{Proposition \ref{#1}}
\newcommand{\corref}[1]{Corollary \ref{#1}}
\newcommand{\thmref}[1]{Theorem \ref{#1}}
\newcommand{\remref}[1]{Remark \ref{#1}}
\DeclareMathOperator{\ad}{\ensuremath{ad}}
\DeclareMathOperator{\wt}{\ensuremath{wt}}
\DeclareMathOperator{\grwt}{\ensuremath{wt^{gr}_{\h_0}}}
\theoremstyle{definition}
\newtheorem*{defn}{Definition}
\theoremstyle{plain}
\newtheorem{thm}{Theorem}
\newtheorem*{prop}{Proposition}
\newtheorem*{cor}{Corollary}
\newtheorem*{lem}{Lemma}
\theoremstyle{remark}
\newtheorem*{rem}{Remark}
\newtheorem*{ex}{Example}
\begin{document}

\title{ad-Nilpotent positively-graded Borel module subalgebras}

\author{Tim Ridenour}
\address{T.~Ridenour: Department of Mathematics, Northwestern
University}
\email{\tt tbr4@math.northwestern.edu}

\author{Adam Sandler}
\address{A.~Sandler: Columbia University}
\email{\tt ars2240@columbia.edu}

\date{\today}

\begin{abstract} In this paper, we study certain ad-nilpotent subalgebras contained in the non-zero graded portion of a simple $\Z_n$-graded Lie algebra. These subalgebras respect the grading on the Lie algebra and are modules for a Borel subalgebra for the grade-zero Lie subalgebra. We show that semisimple elements in such subalgebras lie in the center of the subalgebra, and we provide a classification of these subalgebras whose weight space decompositions have only non-zero weights.

\end{abstract}
\maketitle

\section*{Introduction}

Let $\g$ be a simple Lie algebra with a $\Z_n$-grading. Let $\g_0$ be the (reductive) Lie subalgebra of $\g$ in grade zero, and let $\b_0$ be a Borel subalgebra for $\g_0$. Let $\g_+$ be the direct sum of the non-zero graded components of $\g$. Then, $\g_+$ is a $\b_0$-module. In this paper, we study $\b_0$-submodules of $\g_+$ which are also ad-nilpotent subalgebras of $\g$ which respect the $\Z_n$-grading on $\g$.

These subalgebras are a generalization of $\ad$-nilpotent ideals of a Borel subalgebra of a simple Lie algebra, which have been studied extensively since the publication of \cite{Kos98}. Since then, several papers have been written on the classification and enumeration of ad-nilpotent ideals of Borel subalgebras and various generalizations (see \cite{CP1}, \cite{CP2}, \cite{CP3}, \cite{Sut}, \cite{Pan1}, \cite{Rig}, \cite{CDR}, \cite{BM} to name a few).

In \cite{Pan}, Panyushev further generalized this problem to consider the so-called abelian $\b_0$-stable subalgebras of $\g_1$ for a $\Z_2$-graded Lie algebra $\g = \g_0 \oplus \g_1$. The classification of these subalgebras was further studied in \cite{CP4} and \cite{Dol}, among others. In particular, in \cite{Dol}, a criterion on the set of $\g_0$-weights of $\g_1$ was given to classify the abelian $\b_0$-stable subalgebras using methods similar to those from \cite{CDR}. In this paper, we generalize the methods of \cite{Dol} to classify the $k$-nilpotent positively-graded $\b_0$-module subalgebras for a simple $\Z_n$-graded Lie algebra $\g$.

At this point, we note a few complications that arise in the general case, where $\g$ is $\Z_n$-graded, which were not present in the classification of the abelian $\b_0$-stable subalgebras for $\Z_2$-graded Lie algebras. First, if $\g_+$ is the non-zero graded portion of $\g$, the non-zero weight spaces of $\g_+$ (considered as a $\g_0$-module) may have dimension higher than one. Accordingly, it is possible for a $k$-nilpotent $\b_0$-module subalgebra of $\g_+$ to fail to respect the grading on $\g$; in other words, there exist ad-nilpotent $\b_0$-module subalgebras $\lie{a}$ of $\g_+$ such that $\lie{a} \neq \oplus_{j=1}^{n-1} \lie{a}\cap \g_j$ (see \remref{E2} for example). We avoid this complication by only considering the subalgebras which respect the grading on $\g$.

Second, if $n$ is not prime, an ad-nilpotent positively-graded $\b_0$-module subalgebra $\i$ may contain non-zero elements of weight $0$. The methods of \cite{Dol} do not apply when $0$ is a weight of $\i$. We discuss this possibility at the end of section one and the beginning of section two. In particular, \propref{C1} can be used to show that $\i$ is the direct sum of a subalgebra $\i'$ which has all non-zero weights and a subalgebra of the elements of $\i$ with weight $0$. The subalgebra $\i'$ may then be classified completely by its set of weights in each $\g_j$.

\subsection*{Organization}

In Section 1 of this paper, we provide some preliminary results. We show that any semisimple element in an ad-nilpotent positively-graded $\b_0$-module subalgebra $\i$ is contained in the center of $\i$.

In section two, we provide our main result. We first show that any $k$-nilpotent positively-graded $\b_0$-module subalgebra $\i$ is the direct sum of a central subalgebra and a subalgebra with non-zero $\h_0$-weights. We then provide a complete classification of the $k$-nilpotent positively-graded $\b_0$-module algebras with only non-zero $\h_0$-weights. 

Let $Q_0^+$ be the positive root lattice of (the semisimple portion of) $\g_0$. Let $P_0$ be the set of integral weights of $\g_0$, and let $\curs{P} = P_0 \times \Z_n$. We define a partial order $\po_0$ on $\curs{P}$ as follows: $(\nu,i) \po_0 (\mu,j)$ if $i = j$ and $\mu - \nu \in Q_0^+$. As usual, an antichain of $\curs{P}_0$ is a set of elements which are unrelated in this order.

Assume that $\i$ is a $k$-nilpotent positively-graded $\b_0$-module subalgebra of $\g$. A result from \cite{Pan} shows that a weight space corresponding to a non-zero weight of a graded component of a simple Lie algebra $\g$ is one-dimensional. Any positively-graded $\b_0$-module subalgebra with non-zero weights is therefore a direct sum of one-dimensional weight spaces. Thus, $\i$ determines a set of weights in each graded portion of $\g$, and we call the collection of all of these weights along with the grade for each weight the set of {\it graded $\h_0$-weights} of $\i$, $\grwt (\i)$. Let $\curs{A}_{\i}$ be the antichain of minimal elements of $\grwt (i)$. The set $\grwt (\i)$ consists precisely of the weights in $\grwt (\g_+)$ which are larger than some element in $\curs{A}_{\i}$. We give a necessary and sufficient condition for an antichain of non-zero weights in $\grwt (\g_+)$ to define a $k$-nilpotent positively-graded $\b_0$-module subalgebra.

Finally, in section three of the paper, we provide full classifications of abelian positively-graded $\b_0$-module subalgebras with non-zero weights in small cases. In particular, we provide a full classification for $\g$ of type $A_3$ with a $Z_3$-grading, up to conjugacy. We also provide a $\Z_4$-graded example for $A_3$ with the grading arising from an outer automorphism.

\subsection*{Acknowledgments} 

The first author is grateful to RJ Dolbin for sharing the results of his thesis and for fruitful discussions. We are grateful to Volodymyr Mazorchuk for bringing \cite{BM} to our attention.

\section{Preliminaries}

\noindent We first fix some notation that will be used throughout the paper. We describe the set of complex numbers (resp. integers, non-negative integers, integers modulo $n$) by $\C$ (resp. $\Z$, $\Z_+$, $\Z_n$). We write $i \in \Z_n$ for any element of $\Z_n$ instead of $\overline{i}$. Let $\g$ be a Lie algebra over $\C$ with Cartan subalgebra $\h$. The set of roots (resp., positive roots) for $\g$ are denoted by $R$ (resp., $R^+$). Let $I$ be an indexing set for a set of simple roots for $\g$. Fix a set of Chevalley generators for $\g$: $\{x_{\pm \alpha} \ | \ \alpha \in R^+\} \cup \{h_i \ | \ i \in I\}$.  The Killing form on $\g$ is denoted by $\kappa:\g \otimes \g \to \C$. 

\subsection{}

The following proposition is standard. We supply a proof for the ease of the reader.

\begin{prop} Every $\Z_n$-grading on  a Lie algebra corresponds to a Lie algebra automorphism of $\g$ with order $n$. Furthermore, every automorphism of $\g$ with order $n$ gives a $\Z_n$ grading on $\g$.

\end{prop}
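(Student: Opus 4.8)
The plan is to establish the two directions of the correspondence separately, working over $\C$ and using the standard fact that a finite-order automorphism of a finite-dimensional complex vector space is diagonalizable.

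First, suppose $\g = \bigoplus_{i \in \Z_n} \g_i$ is a $\Z_n$-grading, meaning $[\g_i, \g_j] \subseteq \g_{i+j}$ for all $i, j \in \Z_n$. Fix a primitive $n$-th root of unity $\zeta \in \C$ and define $\sigma \colon \g \to \g$ by letting $\sigma$ act on $\g_i$ as multiplication by $\zeta^i$. I would then check that $\sigma$ is a Lie algebra homomorphism: for $x \in \g_i$ and $y \in \g_j$ we have $\sigma([x,y]) = \zeta^{i+j}[x,y] = [\zeta^i x, \zeta^j y] = [\sigma(x), \sigma(y)]$, and this extends bilinearly to all of $\g$ by the direct sum decomposition. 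It is invertible (its inverse acts as $\zeta^{-i}$ on $\g_i$), hence an automorphism, and $\sigma^n$ acts as $\zeta^{ni} = 1$ on each $\g_i$, so $\sigma^n = \mathrm{id}$; since $\zeta$ is primitive and the grading is genuinely $\Z_n$-indexed, the order is exactly $n$ (if some $\g_i \neq 0$ for $i$ generating $\Z_n$; otherwise one should phrase this as "order dividing $n$," which is the usual convention).

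Conversely, suppose $\sigma \in \mathrm{Aut}(\g)$ has order $n$. Since $\sigma^n = \mathrm{id}$, the minimal polynomial of $\sigma$ divides $T^n - 1$, which has distinct roots over $\C$ (the $n$-th roots of unity $\zeta^i$), so $\sigma$ is diagonalizable with eigenvalues among $\{\zeta^i : i \in \Z_n\}$. Set $\g_i = \{x \in \g : \sigma(x) = \zeta^i x\}$; then $\g = \bigoplus_{i \in \Z_n} \g_i$ as a vector space. To see this is a Lie algebra grading, take $x \in \g_i$, $y \in \g_j$: then $\sigma([x,y]) = [\sigma(x),\sigma(y)] = [\zeta^i x, \zeta^j y] = \zeta^{i+j}[x,y]$, so $[x,y] \in \g_{i+j}$, giving $[\g_i,\g_j] \subseteq \g_{i+j}$ as required.

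I do not expect a serious obstacle here; this is a routine unwinding of definitions, and the only mildly substantive input is the diagonalizability of a finite-order operator over an algebraically closed field of characteristic zero. The one point requiring a little care is matching conventions on the word "order": a $\Z_n$-grading in which $\g_i = 0$ for all $i$ outside a proper subgroup really corresponds to an automorphism whose order properly divides $n$, so in the write-up I would either restrict to ``effective'' $\Z_n$-gradings or state the correspondence as being with automorphisms of order dividing $n$. I would also remark that the choice of primitive root of unity $\zeta$ is part of the data, so the bijection is between $\Z_n$-gradings and pairs (automorphism, primitive $n$-th root of unity), or one fixes $\zeta = e^{2\pi i/n}$ once and for all.
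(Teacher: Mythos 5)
Your proof is correct and follows essentially the same route as the paper: define $\sigma$ to act by $\zeta^i$ on $\g_i$ for one direction, and take eigenspaces of the automorphism for the other. You actually supply two details the paper leaves implicit — the diagonalizability of a finite-order operator over $\C$ (which justifies the direct sum decomposition) and the caveat that a degenerate grading corresponds to an automorphism of order merely dividing $n$ — both of which are worthwhile observations.
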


\begin{proof} Let $\g = \displaystyle\bigoplus_{i=0}^{n-1} \g_i$ be a $\Z_n$-grading on $\g$. Let $\zeta$ be a primitive $n^{th}$ root of unity. Define $\sigma:\g \to \g$ by linearly extending the assignment given by \[\sigma(x) = \zeta^ix, \ x\in\g_i. \] For $x \in \g_i$ and $y\in\g_j$, $[x,y]\in\g_{i+j}$, which gives \[\sigma([x,y]) = \zeta^{i+j}[x,y].\]  Also, $$[\sigma(x),\sigma(y)] = [\zeta^i x, \zeta^j y] = \zeta^{i+j}[x,y],$$ so $\sigma$ is a Lie algebra automorphism on $\g$.

\noindent Now, suppose that $\varphi:\g\to\g$ is an automorphism of order $n$. For $0 \leq i \leq n-1$, define \[\g_i := \{x \in \g \ | \ \varphi(x) = \zeta^{i}x\}.\] Then, $\g = \g_0 \oplus \g_1 \oplus \dots \oplus \g_{n-1}$ gives a $\Z_n$ grading on $\g$.

\end{proof}

\subsection{}

\noindent Let $\g$ be of type $X_N$, and let $\sigma$ be a finite-order automorphism of $\g$. We note the following (Theorem 8.6 from {\cite{Kac}}):

\begin{thm}\label{Kac} Let ${\bf s} = (s_0,s_1,...,s_{\ell};r)$ where $s_0,s_1,...,s_{\ell}$ are nonnegative relatively prime integers and $r\in\{1,2,3\}$. Set $n = r\sum_{i = 0}^\ell a_is_i$.

\begin{enumerate} \item[$(i)$] The relations \[\sigma_{\bf s}(E_j) = e^{2\pi i s_j/n}E_j, \ (j = 0,1,...,\ell)\] define (uniquely) an $n^{th}$ order automorphism of $\g$.

\

\item[$(ii)$] Every $n^{th}$ order automorphism of $\g$ is conjugate to some $\sigma_{\bf s}$.

\

\item[(iii)] Let ${\bf s'} = (s_0',s_1',...,s_\ell';r')$. Then, $\sigma_{\bf s}$ is conjugate to $\sigma_{\bf s'}$ by a $\g$-automorphism if and only if $r = r'$ and the sequence $(s_0,...,s_\ell)$ can be transformed to $(s_0',...,s_\ell')$ by an automorphism of the affine Dynkin diagram of type $X_N^{(r)}$.

\end{enumerate}

\end{thm}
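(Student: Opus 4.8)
The plan is to prove this the way Kac's theorem is usually established, by realizing finite-order automorphisms of $\g$ as rational points of the fundamental alcove of the affine root system of type $X_N^{(r)}$.

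First I would put an arbitrary $n^{th}$-order automorphism $\sigma$ into a normal form. Since $\sigma$ is semisimple, the fixed subalgebra $\g^{\sigma}$ is reductive; choosing a Cartan subalgebra and a Borel subalgebra of $\g^{\sigma}$ and passing to their centralizer and normalizer in $\g$ yields a $\sigma$-stable Cartan subalgebra $\h \subseteq \g$ and a $\sigma$-stable Borel $\b \supseteq \h$ (using conjugacy of Cartans and Borels inside the reductive group $\g^{\sigma}$). Then $\sigma$ permutes the simple root vectors up to scalars, inducing a diagram automorphism $\tau$ of $X_N$ of order $r \in \{1,2,3\}$; after conjugating by a torus element to absorb the scalars and by a fixed representative of $\tau$ in $\mathrm{Aut}(\g)$, one reaches $\sigma = \mathrm{Ad}(\exp(2\pi i x)) \circ \tau$ with $x$ in the $\tau$-fixed part of $\h$. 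The relevant combinatorial object is now the twisted affine root system of type $X_N^{(r)}$, with finite simple roots $\alpha_1,\dots,\alpha_{\ell}$, affine node $\alpha_0$ (the appropriate lowest-weight datum for $\tau$ acting on $\g$), marks $a_0,\dots,a_{\ell}$ satisfying $\sum_{i=0}^{\ell} a_i\alpha_i = \delta$, and $E_0$ the distinguished generator attached to the affine node.

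Next I would translate the order condition. Reading off from the action of $\sigma$ on $E_0, E_1,\dots,E_{\ell}$ the integers $s_j$ with $\sigma(E_j) = e^{2\pi i s_j/n}E_j$, the constraint $\sigma^n = \mathrm{id}$ combined with $\sum a_i\alpha_i = \delta$ forces $n = r\sum_{i=0}^{\ell} a_i s_i$; conversely, any choice of relatively prime nonnegative integers $s_j$ produces via these formulas a well-defined automorphism of order exactly $n$, giving $(i)$. For $(ii)$, an arbitrary $x$ can be moved into the closed fundamental alcove of the $\tau$-twisted affine Weyl group by an affine Weyl group element, each such element being realized by a conjugation in $\g$; the rational points of that alcove are exactly the barycentric combinations of the vertices whose coordinates become relatively prime nonnegative integers after clearing denominators, so $\sigma$ is conjugate to some $\sigma_{\bf s}$. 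For $(iii)$, $\sigma_{\bf s}$ is conjugate to $\sigma_{\bf s'}$ precisely when $r = r'$ (the order of the outer part of $\sigma$ is a conjugacy invariant) and the two alcove points lie in a single orbit of the stabilizer of the alcove in the extended affine Weyl group modulo the affine Weyl group; this stabilizer is canonically the symmetry group of the affine Dynkin diagram of type $X_N^{(r)}$, and one checks that every such diagram symmetry is induced by a $\g$-automorphism while no further $\g$-automorphism relates distinct alcove points.

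The main obstacle is the reduction to normal form in the twisted cases ($r>1$) and the precise matching of data it requires: verifying that $\tau$ can be chosen of order exactly $r\in\{1,2,3\}$ and to commute with the chosen torus element, correctly identifying the affine-node generator $E_0$ with the right lowest-weight vector for $\tau$ acting on $\g$ (so that the twisted affine Dynkin diagram, rather than the untwisted extended one, governs the combinatorics), and matching the marks $a_i$ with Kac's labels. Once the alcove picture is set up correctly, parts $(i)$--$(iii)$ are the standard description of torsion conjugacy classes in a complex reductive group, but getting the twisted case right is the delicate point.
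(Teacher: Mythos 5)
This statement is quoted verbatim from Kac (Theorem 8.6 of \cite{Kac}) as background; the paper supplies no proof of it, so there is nothing internal to compare your argument against. Your outline is the standard proof of that theorem --- reduction of a torsion automorphism to the normal form $\mathrm{Ad}(\exp(2\pi i x))\circ\tau$ via a $\sigma$-stable Cartan and Borel, translation of the order condition into $n=r\sum a_i s_i$, the fundamental-alcove argument for the twisted affine Weyl group for part $(ii)$, and the identification of the alcove stabilizer with the symmetries of the diagram $X_N^{(r)}$ for part $(iii)$ --- and it is correct as a roadmap, with the delicate twisted-case bookkeeping ($r>1$) honestly flagged rather than carried out.
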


\subsection{}

\noindent Let $\g = \g_0 \oplus \g_1 \oplus \dots \oplus \g_{n-1}$ be the $\Z_n$-grading on $\g$. Define \[\g_+ := \bigoplus_{i=1}^{n-1}\g_i.\] By the previous theorem, we may assume that $\h_0 = \h \cap \g_0$ is a Cartan subalgebra of $\g_0$. Note that both $\left.\kappa\right|_{\g_0}: \g_0 \otimes \g_0 \to \C$ and $\left.\kappa\right|_{\h_0}: \h_0 \otimes \h_0 \to \C$ are non-degenerate. We denote the non-degenerate, symmetric invariant bilinear form on $\h_0^*$ induced by the restriction of the Killing form on $\g$ to $\h_0$ by \[( \ , \ )_0: \h_0^* \otimes \h_0^* \to \C\]

Let $\b_0 \subset \g_0$ be a Borel subalgebra of $\g_0$ that contains $\h_0$. We will now define our primary objects of interest.

\begin{defn} A subset $\lie{i} \subset \g_+$ is called a {\it $k$-nilpotent positively-graded $\b_0$-module subalgebra} if $[\lie{i},\lie{i}] \subset \lie{i}$, $[\b_0,\lie{i}] \subset \lie{i}$, $(\ad\lie{i})^k(\lie{i}) = 0$ and $\lie{i} = \bigoplus_{j=1}^{n-1} (\lie{i}\cap\g_j)$.

\end{defn}

\subsection{}

In the remainder of this section, we consider semisimple elements in $\g_+$ and state an important preliminary result concerning the semisimple elements inside of $k$-nilpotent positively-graded $\b_0$-module subalgbras.

\begin{prop}\label{P1} Suppose that $h \in \g_j$ is semisimple and $[\g_0,h] = 0$. Then, $h$ is in the center of the subalgebra $\g^{(j)} := \displaystyle \bigoplus_m \g_{mj}$. In particular, if $j$ is a unit in $\Z_n$, $h=0$.

\end{prop}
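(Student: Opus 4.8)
The plan is to exploit the fact that $h$ is semisimple and $\g_0$-invariant to run an $\mathfrak{sl}_2$-style (or rather $\ad h$-eigenspace) argument inside the subalgebra $\g^{(j)} = \bigoplus_m \g_{mj}$. First I would observe that since $h \in \g_j$ and $\g^{(j)}$ is $\Z$-graded by $m$ (really $\Z_{n/\gcd(n,j)}$-graded), the operator $\ad h$ shifts the grading by $j$; moreover $\ad h$ is semisimple on $\g$, hence on $\g^{(j)}$. The key leverage is the hypothesis $[\g_0,h]=0$: since $\h_0 \subset \g_0$, we get $[\h_0, h] = 0$, so $h$ lies in the weight-zero space $(\g_j)_0$ of $\g_j$ as an $\h_0$-module. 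I would want to show $\ad h$ acts nilpotently on $\g^{(j)}$, which combined with semisimplicity forces $\ad h = 0$ on $\g^{(j)}$, i.e. $h$ is central there.

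To get nilpotency, I would use the Killing form. For $x \in \g_{mj}$, the bracket $[h,x]$ lies in $\g_{(m+1)j}$, and iterating, $(\ad h)^\ell x \in \g_{(m+\ell)j}$. Since $\g^{(j)}$ has only finitely many graded pieces and $h$ itself has weight $0$ under $\h_0$, I would pair things against the Killing form: for any $x,y$, $\kappa\bigl((\ad h)x, y\bigr) = -\kappa\bigl(x, (\ad h)y\bigr)$, and because $h$ commutes with all of $\g_0 \supset \h_0$, the element $h$ is orthogonal (under $\kappa$) to everything outside $(\g_{-j})_0$; a careful bookkeeping of which graded/weight pieces can pair nontrivially should show that the trace of $(\ad h)^2$ restricted to $\g^{(j)}$ vanishes, and more generally that $\ad h$ is nilpotent. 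Alternatively — and this is probably cleaner — since $h$ is semisimple as an element of $\g$, write $h = h_s$ and note that $\C h$ together with the requirement $[\g_0,h]=0$ means $\ad h$ commutes with $\ad \g_0$; a semisimple element of $\g$ lying in a single nonzero graded piece $\g_j$ of a $\Z_n$-grading must already satisfy $(\ad h)$ being both semisimple and (by the grading shift) having all eigenvalues such that its characteristic polynomial is a power of $T$ once we restrict to $\g^{(j)}$ — hence $\ad h|_{\g^{(j)}} = 0$.

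For the final assertion, if $j$ is a unit in $\Z_n$ then $\g^{(j)} = \bigoplus_m \g_{mj} = \g$, so $h$ is central in $\g$; since $\g$ is simple, $h = 0$.

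The main obstacle I anticipate is making the nilpotency argument fully rigorous: I need to rule out the possibility that $\ad h$ has a nonzero eigenvalue on $\g^{(j)}$ while respecting that $\ad h$ shifts the $\Z_n$-grading cyclically (so one cannot naively say "grading-shifting operators are nilpotent" — that is true for $\Z$-gradings, not $\Z_n$-gradings). The resolution must come from the semisimplicity of $h$ interacting with the Killing form: an eigenvector of $\ad h$ with eigenvalue $\lambda \neq 0$ in $\g_{mj}$ would, after going around the cycle, produce a closed chain, and pairing the ends against $\kappa$ (using $[\g_0,h]=0$ to control which pairings survive) should force $\lambda = 0$. Pinning down exactly this Killing-form computation is the crux of the proof.
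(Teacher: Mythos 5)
Your high-level plan (prove $\ad h$ is nilpotent on $\g^{(j)}$, then invoke semisimplicity to get $\ad h|_{\g^{(j)}}=0$) is a legitimate strategy, and your treatment of the final assertion ($j$ a unit implies $\g^{(j)}=\g$, so centrality plus simplicity gives $h=0$) agrees with the paper. But the nilpotency claim \emph{is} the proposition, and your proposal does not establish it: you explicitly defer ``pinning down exactly this Killing-form computation,'' and neither of the two mechanisms you sketch closes the gap. Vanishing of $\mathrm{tr}\bigl((\ad h)^2|_{\g^{(j)}}\bigr)$ does not force a semisimple operator over $\C$ to vanish (eigenvalues $1$ and $i$ already give $\mathrm{tr}(A^2)=0$; you would need all power traces, which your bookkeeping does not supply). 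The alternative assertion that the characteristic polynomial of $\ad h|_{\g^{(j)}}$ ``is a power of $T$'' is precisely the nilpotency being sought, stated without proof, and as you yourself note the grading-shift argument that would deliver it for a $\Z$-grading breaks down for a cyclic grading. A further warning sign is that for most of the argument you replace the hypothesis $[\g_0,h]=0$ by its much weaker consequence that $h$ has $\h_0$-weight zero; that weakening is fatal. In the paper's own $\Z_4$-graded $A_3$ example, $E_{\gamma_0+\gamma_1+\gamma_2}=h_1-h_3\in\g_2$ is semisimple of $\h_0$-weight zero, yet $[E_1,h_1-h_3]\neq 0$ with $E_1\in\g_0$, so it is not central in $\g^{(2)}=\g_0\oplus\g_2$. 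Any correct proof must use the full centralizing hypothesis, not just the weight.

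The paper avoids the nilpotency detour entirely by an induction on $m$ in which $[\g_0,h]=0$ is the base case rather than a side remark. Assuming $[\g_{mj},h]=0$, invariance of the Killing form gives $\kappa([\g_{-(m+1)j},h],\g_{mj})=\kappa(\g_{-(m+1)j},[h,\g_{mj}])=0$; since $[\g_{-(m+1)j},h]\subset\g_{-mj}$ and $\kappa$ pairs $\g_{-mj}$ with $\g_{mj}$ nondegenerately, this forces $[\g_{-(m+1)j},h]=0$. Semisimplicity of $h$ then enters exactly once: its centralizer is reductive (and graded, since $\ad h$ permutes the graded pieces), so $\kappa$ pairs its components in degrees $\pm(m+1)j$ nondegenerately, and containing all of $\g_{-(m+1)j}$ forces it to contain all of $\g_{(m+1)j}$, i.e.\ $[\g_{(m+1)j},h]=0$. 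Iterating over $m$ yields $[\g^{(j)},h]=0$. This induction is exactly the ``careful Killing-form computation'' your proposal leaves open; without it, or something equivalent, the argument is incomplete.
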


\begin{proof} The proof proceeds by induction with the base case given by assumption. Suppose that $[\g_{mj}, h] = 0$. Then, \[\kappa([\g_{-(m+1)j},h],\g_{mj}) = \kappa(\g_{-(m+1)j},[h,\g_{mj}]) = 0.\] By the non-degeneracy of the Killing form, $[\g_{-(m+1)j},h] = 0$. Furthermore, since the centralizer of $h$ is reductive, we have $[\g_{(m+1)j},h] = 0$.

In the case that $j$ is a unit in $\Z_n$, $\g = \oplus_m \g_{mj}$. Since $\g$ is simple, the center is $0$; hence, $h = 0$.

\end{proof}

\subsection{}

\begin{prop} \label{C1} Suppose that $\i$ is a $k$-nilpotent positively-graded $\b_0$-module subalgebra. If $h \in \i\cap\h_j$, then $h$ is in the center of $\g^{(j)}$ and $[h,\i] =0$. Furthermore, if $j$ is a unit in $\Z_n$, then $\i\cap\h_j = 0$.

\end{prop}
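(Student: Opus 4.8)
The plan is to deduce the statement from \propref{P1} by establishing $[\g_0,h]=0$, obtaining $[h,\i]=0$ essentially for free along the way. Since $h\in\h_j\subset\h$, the operator $\ad h$ is semisimple on $\g$; because $h\in\i$ and $\i$ is a subalgebra, $\ad h$ preserves $\i$, while $(\ad\i)^k(\i)=0$ forces $(\ad h)^k$ to vanish on $\i$. The restriction of a semisimple operator to an invariant subspace is again semisimple, so $\ad h|_{\i}$ is simultaneously semisimple and nilpotent, hence $0$; this is the assertion $[h,\i]=0$.

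Next I would upgrade this to $[\b_0,h]=0$. As $h\in\i$ and $[\b_0,\i]\subset\i$, we have $[\b_0,h]\subset\i$, so by the previous paragraph $[h,[\b_0,h]]=0$, i.e.\ $[\b_0,h]\subseteq\ker(\ad h)$. But also $[\b_0,h]=(\ad h)(\b_0)\subseteq\operatorname{im}(\ad h)$, and for the diagonalizable operator $\ad h$ one has $\ker(\ad h)\cap\operatorname{im}(\ad h)=0$; hence $[\b_0,h]=0$.

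The crucial step — and the one I expect to be the main obstacle — is passing from $[\b_0,h]=0$ to $[\g_0,h]=0$. The naive approach (the centralizer of $h$ in $\g_0$ is a parabolic containing $\b_0$, so it should be all of $\g_0$) is awkward here because $\ad h$ does not preserve $\g_0$: it shifts the $\Z_n$-grading by $j$. Instead I would work root vector by root vector. It suffices to show $[h,x_{-\alpha_i}]=0$ for each simple root $\alpha_i$ of $\g_0$ relative to $\h_0$, since $\b_0$ together with the $x_{-\alpha_i}$ generate $\g_0$ and the centralizer of $h$ is a subalgebra. Fix such an $\alpha_i$, let $h_{\alpha_i}=[x_{\alpha_i},x_{-\alpha_i}]\in\h_0$ be the associated coroot, and put $w:=[h,x_{-\alpha_i}]$. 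Using $[\b_0,h]=0$ (in particular $[x_{\alpha_i},h]=0$ and $[h_{\alpha_i},h]=0$) together with the Jacobi identity, one computes $[x_{\alpha_i},w]=[h,[x_{\alpha_i},x_{-\alpha_i}]]=[h,h_{\alpha_i}]=0$ and $[h_{\alpha_i},w]=[h,[h_{\alpha_i},x_{-\alpha_i}]]=-2w$. Thus, under the $\ad$-action of the $\mathfrak{sl}_2$-triple $\{x_{\alpha_i},h_{\alpha_i},x_{-\alpha_i}\}\subset\g_0$ on the finite-dimensional module $\g$, the vector $w$ is annihilated by the raising operator $x_{\alpha_i}$ but has $h_{\alpha_i}$-eigenvalue $-2<0$. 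Decomposing $\g$ into irreducible $\mathfrak{sl}_2$-submodules, the kernel of the raising operator in each summand is its top weight space, of weight $\geq 0$; since $w$ has pure weight $-2$, its component in each summand vanishes, so $w=0$.

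Finally, with $[\g_0,h]=0$ and $h$ semisimple in hand, \propref{P1} applies directly: $h$ lies in the center of $\g^{(j)}$, and if $j$ is a unit in $\Z_n$ then $h=0$, so $\i\cap\h_j=0$. Combined with $[h,\i]=0$ from the first paragraph, this proves the proposition.
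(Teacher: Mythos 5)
Your proof is correct, and it reaches the conclusion by a more self-contained route than the paper's. The paper argues by contradiction: assuming $h$ is not central in $\g^{(j)}$, it invokes (without proof) the implication $[\b_0,h]=0\Rightarrow[\g_0,h]=0$ to produce $e\in\b_0$ with $[e,h]\neq 0$, and then exhibits the nonzero element $(\ad h)^{k+1}(e)=(\ad h)^k([h,e])\in(\ad\i)^k(\i)$ to contradict $k$-nilpotence; the claim $[h,\i]=0$ is dispatched with ``a similar argument.'' Your version extracts the same two essential inputs --- semisimplicity of $\ad h$ for $h\in\h$, and $\b_0$-stability putting $[\b_0,h]$ inside $\i$ --- but packages the nilpotence step directly as ``semisimple $+$ nilpotent $=0$'' and ``$\ker(\ad h)\cap\operatorname{im}(\ad h)=0$,'' which is arguably cleaner than the paper's eigenvalue computation. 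The substantive difference is that you correctly identify the implication $[\b_0,h]=0\Rightarrow[\g_0,h]=0$ as the step needing justification (the usual parabolic argument does not apply verbatim since $\ad h$ shifts the grading out of $\g_0$) and you supply a complete proof via the $\mathfrak{sl}_2$-triples attached to the simple roots of $\g_0$: the vector $w=[h,x_{-\alpha_i}]$ is killed by the raising operator yet has weight $-2$, hence vanishes. This fills a gap the paper leaves to the reader, at the cost of a longer argument; the paper's proof is shorter but leans on that unproved (if standard) fact.
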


\begin{proof} Suppose that $h\in(\i\cap\h_j)$ is not in the center of $\g^{(j)}$. Find $e \in \b_0$ such that $[e,h]\neq 0$, which exists since $[\b_0,h] = 0$ implies $[\g_0,h]=0$, which gives $h = 0$ by Proposition \ref{P1}. 

Since $\i$ is $\b_0$-stable, $[h,e]\in\i$. Furthermore, since $h\in\h$ and $[h,e]\neq 0$, \[0\neq(\ad h)^{k+1}(e) = (\ad h)^k([h,e])\in(\ad \i)^k(\i),\] which contradicts the $k$-nilpotence of $\i$.

A similar argument shows that $[h,\i] = 0$.

\end{proof}

\subsection{}

If $n$ is prime, \propref{C1} shows that any $k$-nilpotent positively-graded $\b_0$-module subalgebra $\i$ contains no non-zero semisimple elements. The following gives an example of an abelian positively-graded $\b_0$-module subalgebra which contains non-zero semisimple elements.

\begin{ex} Suppose that $\g$ is of type $E_6$. We use the enumeration of the nodes of the Dynkin diagram as in \cite{Kac}:

\begin{center}
\begin{figure}[H]\label{Fig1}
  \begin{tikzpicture}[scale=.4]
    \foreach \x in {0,...,4}
    		\draw[thick,xshift=\x cm] (\x cm,0) circle (3 mm);
    \foreach \y in {0,...,3}
    		\draw[thick,xshift=\y cm] (\y cm,0) ++(.3 cm, 0) -- +(14 mm,0);
    \draw[thick] (4 cm,2 cm) circle (3 mm);
    \draw[thick] (4 cm, 3mm) -- +(0, 1.4 cm);
    \draw (4 cm,2.9 cm) node {6}; 
    \draw (0 cm,-8mm) node {1};
    	\draw[xshift= 1 cm] (1 cm,-8mm) node {2};
    	\draw[xshift= 2 cm] (2 cm,-8mm) node {3};
    	\draw[xshift= 3 cm] (3 cm,-8mm) node {4};
    	\draw[xshift= 4 cm] (4 cm,-8mm) node {5};
    	\end{tikzpicture}
  \caption{Dynkin diagram for $E_6$}
\end{figure}
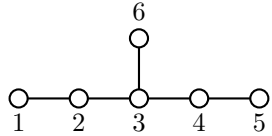
\end{center}

\noindent Consider ${\bf s} = (0,1,0,0,0;2)$. Then, $\sigma_{\bf s}$ is a fourth-order automorphism, which gives rise to a $\Z_4$-grading of $\g$. Let $h = 2h_1 + h_2 - h_4 - 2h_5$. Direct computation shows that $h \in \g_2$ and $[\g_0,h]=0$, so $\i = \C h$ is an abelian positively-graded $\b_0$-module subalgebra.

\end{ex}

\section{Main Theorem}

In this section, we prove our main theorem, classifying all $k$-nilpotent positively-graded $\b_0$-module subalgebras. We first need to introduce some more notation.

Define $R_0$ to be the set of roots (i.e., non-zero $\h_0$-weights) of $\g_0$, and let $\Delta_0$ be a set of simple roots for $\g_0$. For each non-zero $j \in \Z_n$, define $R_j$ to be the set of $\h_0$-weights of $\g_j$. For each $\mu \in R_j$, let $\g_j^{(\mu)}$ be the $\h_0$ weight space of $\g_j$ with $\h_0$-weight $\mu$. Let \[\Sigma_+ = \{(\mu,j) \ | \ \mu \in R_j, \mu \neq 0, \ j\not\equiv 0 \}\] Suppose that $\i$ is a $k$-nilpotent positively-graded $\b_0$-module subalgebra of $\g_+$. By \propref{C1}, we know that $\i = \i'\oplus \lie{c}$ where $\lie{c} \subset \h\cap\g_+$, $[\lie{c},\i] = 0$, and $\i'$ is a $k$-nilpotent positively-graded $\b_0$-module subalgebra with $\i' \cap \g_j^{(0)} = \{0\}$ for all $j \in \Z_n$.

Recall the following lemma from {\cite{Pan}}, which is restated using the notation of the previous paragraph:

\begin{lem} If $(\mu,j)\in\Sigma_+$, then the dimension of the weight space $\g_j^{(\mu)}$ is $1$.

\end{lem}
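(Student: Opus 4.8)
The plan is to produce, for any nonzero $x\in\g_j^{(\mu)}$, an $\mathfrak{sl}_2$-triple $(x,h,y)$ with $h\in\h_0$ acting on all of $\g_j^{(\mu)}$ by the scalar $2$, and then to invoke a one-line fact about finite-dimensional $\mathfrak{sl}_2$-modules. First I would pair $\g_j^{(\mu)}$ with $\g_{-j}^{(-\mu)}$ via the Killing form: since every automorphism preserves $\kappa$ one has $\kappa(\g_k,\g_\ell)=0$ unless $k+\ell\equiv 0$, and the invariance of $\kappa$ under $\ad\h_0$ gives $\kappa(\g_j^{(\nu)},\g_{-j}^{(\nu')})=0$ unless $\nu+\nu'=0$; together with the non-degeneracy of $\kappa$ on $\g$ this yields a non-degenerate pairing $\g_j^{(\mu)}\times\g_{-j}^{(-\mu)}\to\C$. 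Fix $0\neq x\in\g_j^{(\mu)}$ and choose $y\in\g_{-j}^{(-\mu)}$ with $\kappa(x,y)=1$. Now $[\g_j^{(\mu)},\g_{-j}^{(-\mu)}]$ lies in $\g_0$ and has $\h_0$-weight $0$, hence sits in the centralizer of $\h_0$ in $\g_0$, which is $\h_0$; the identity $\kappa([x',y],h)=\mu(h)\kappa(x',y)$ for $h\in\h_0$ then shows $[x',y]=\kappa(x',y)\,h_\mu$ for all $x'\in\g_j^{(\mu)}$, where $h_\mu\in\h_0$ is determined by $\kappa(h_\mu,h)=\mu(h)$. In particular $[x,y]=h_\mu\neq0$ and $\mu(h_\mu)=(\mu,\mu)_0$.

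The crucial step, and the one I expect to be the main obstacle, is to rule out $(\mu,\mu)_0=0$ — that is, to guarantee that $\langle x,h_\mu,y\rangle$ is a copy of $\mathfrak{sl}_2$ rather than a Heisenberg algebra. Suppose $(\mu,\mu)_0=0$; then $\mu(h_\mu)=0$, so $[h_\mu,x]=[h_\mu,y]=0$. Since $h_\mu$ is a semisimple element of $\g$, its centralizer $\g^{h_\mu}$ is reductive and contains both $x$ and $y$, so $[x,y]=h_\mu\in[\g^{h_\mu},\g^{h_\mu}]$; but $h_\mu$ also lies in the center of $\g^{h_\mu}$, and in a reductive Lie algebra the center meets the derived subalgebra trivially, forcing $h_\mu=0$, a contradiction. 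Hence $(\mu,\mu)_0\neq0$, and after replacing $h_\mu$ by $h:=\tfrac{2}{(\mu,\mu)_0}h_\mu$ and $y$ by the corresponding scalar multiple we obtain $[x,y]=h$, $[h,x]=2x$, $[h,y]=-2y$ with $h\in\h_0$, together with $[h,x']=\mu(h)x'=2x'$ for every $x'\in\g_j^{(\mu)}$.

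To finish I would view $\g$ as a finite-dimensional module over $\lie{s}:=\langle x,h,y\rangle\cong\mathfrak{sl}_2$ via $\ad$. Assume $\dim\g_j^{(\mu)}\geq2$. Since $[x',y]\in\C h$ for every $x'\in\g_j^{(\mu)}$, the map sending $x'$ to the $h$-coefficient of $[x',y]$ is a linear functional on $\g_j^{(\mu)}$ with nonzero kernel, so there is $0\neq x''\in\g_j^{(\mu)}$ with $[x'',y]=0$. Then $x''$ is annihilated by the lowering operator $\ad y$ yet has $\ad h$-eigenvalue $+2$; this is impossible, because in a finite-dimensional $\mathfrak{sl}_2$-module every vector killed by the lowering operator has non-positive weight. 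Therefore $\dim\g_j^{(\mu)}\leq1$, and as $\g_j^{(\mu)}\neq0$ by hypothesis it is exactly $1$.
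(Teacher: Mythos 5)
Your proof is correct, but there is nothing in the paper to compare it against: the authors state this lemma as a quotation from Panyushev \cite{Pan} and supply no argument of their own. What you have reconstructed is essentially the classical proof of the one-dimensionality of non-zero weight spaces in a graded simple Lie algebra (Vinberg, Kac), and every step checks out. The Killing-form pairing of $\g_j^{(\mu)}$ with $\g_{-j}^{(-\mu)}$ is non-degenerate for exactly the reasons you give (automorphisms preserve $\kappa$, and $\ad\h_0$-invariance forces the weights to be opposite); the bracket $[\g_j^{(\mu)},\g_{-j}^{(-\mu)}]$ lands in $\lie{z}_{\g_0}(\h_0)=\h_0$ because the paper normalizes $\h_0=\h\cap\g_0$ to be a Cartan subalgebra of $\g_0$; and your exclusion of the isotropic case $(\mu,\mu)_0=0$ via the reductivity of the centralizer of the semisimple element $h_\mu$ (center meets derived subalgebra trivially) is the key point where the hypothesis $\mu\neq 0$ is used, correctly leaving open the possibility that $\g_j^{(0)}$ has higher dimension. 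The final $\lie{sl}_2$ step is also sound: a non-zero vector of $\ad h$-eigenvalue $+2$ annihilated by the lowering operator would be a lowest-weight vector of positive weight in a finite-dimensional module, which is impossible, so the kernel of $x'\mapsto\kappa(x',y)$ on $\g_j^{(\mu)}$ must vanish and the space is one-dimensional. (You never use $j\not\equiv 0$, so your argument in fact covers the non-zero weight spaces of $\g_0$ as well, where it reduces to the usual root-space statement.) This is a legitimate self-contained replacement for the external citation.
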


\subsection{}
Let $Q_0$ (resp., $Q_0^+$) be the $\Z$-span (resp., $\Z_+$-span) of $\Delta_0$, and let $P_0$ be the set of integral weights for $\h_0$. Define $\curs{P}_0 := P_0\times \Z_n$. Then, $\curs{P}_0$ has an addition defined by \[(\eta_1,j_1) + (\eta_2,j_2) = (\eta_1 + \eta_2, j_1 + j_2).\] Define a partial order $\po_0$ on $\curs{P}_0$ via \[(\mu_1,j_1) \po_0 (\mu_2,j_2) \iff j_1 = j_2 \mbox{ and } \mu_2-\mu_1 \in Q_0^+.\]

We will need the following proposition to prove the main theorem:

\begin{prop}\label{P2} Suppose that $\beta_j\in R_{i_j}$, $j = 1, ..., \ell$, and \[\displaystyle \sum_{j=1}^{\ell} (\beta_j, i_j) \po_0 (\lambda,i_1 + ... + i_{\ell}) \mbox{ for some }\lambda \in R_{i_1 + ... + i_{\ell}}.\] Then, there exist $\mu_j \in R_{i_j}$ with $(\beta_j,i_j) \po_0 (\mu_j, i_j)$ for all $j$, $\displaystyle \sum_{j=1}^{\ell} \mu_j \in R_{i_1+...+i_{\ell}}$ and \[(\mu_1 + ... + \mu_{\ell}, i_1 + ... + i_{\ell}) \po_0 (\lambda, i_1 + ... + i_{\ell}).\]

\end{prop}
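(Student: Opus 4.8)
The plan is to induct on $\ell$, with the case $\ell=1$ being trivial (take $\mu_1=\lambda$). For the inductive step I would isolate the last summand and work with the splitting $\sum_{j=1}^{\ell}(\beta_j,i_j) = \left(\sum_{j=1}^{\ell-1}(\beta_j,i_j)\right) + (\beta_\ell, i_\ell)$. The key structural input is that $R_{i_1+\dots+i_\ell}$, the set of $\h_0$-weights of the graded piece $\g_{i_1+\dots+i_\ell}$, together with the relation $\po_0$ restricted to the slice $\{(\cdot, i_1+\dots+i_\ell)\}$, behaves like the weight poset of a $\g_0$-module: this is because $\g_{i_1+\dots+i_\ell}$ \emph{is} a finite-dimensional $\g_0$-module, so if a weight $\mu$ lies in $R_{i_1+\dots+i_\ell}$ and $\mu-\alpha$ is still ``$\g_0$-dominated'' by some weight of the module (equivalently, $\mu - \alpha \succeq$ some minimal weight, or $\mu-\alpha$ lies under the convex hull appropriately), then $\mu-\alpha \in R_{i_1+\dots+i_\ell}$ as well. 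More precisely, I would use the elementary fact that in a finite-dimensional $\g_0$-module, if $\mu$ and $\nu$ are weights with $\nu \po_0 \mu$ componentwise (i.e. $\mu - \nu \in Q_0^+$), then the entire ``interval'' need not consist of weights, but the bracket relations $[\g_0, -]$ let us descend: applying lowering operators $x_{-\alpha}$ for $\alpha \in \Delta_0$ repeatedly to a weight vector of weight $\mu$ shows that there is a chain of weights from $\mu$ down to $\nu$ — this is the standard ``saturation in the root string direction'' combined with the fact that $\g_j$ is $\b_0$-stable, hence $\g_0$-stable, so it is a genuine $\g_0$-submodule of $\g$.

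The mechanics of the inductive step: we are given $S_{\ell-1} + (\beta_\ell,i_\ell) \po_0 (\lambda, i)$ where $i = i_1+\dots+i_\ell$ and $S_{\ell-1} = \sum_{j=1}^{\ell-1}(\beta_j,i_j) = (\beta_1+\dots+\beta_{\ell-1}, i_1+\dots+i_{\ell-1})$. The hypothesis says $\lambda - (\beta_1+\dots+\beta_{\ell-1}+\beta_\ell) \in Q_0^+$. I would first need to find an intermediate weight: choose $\nu_\ell \in R_{i_\ell}$ with $(\beta_\ell, i_\ell) \po_0 (\nu_\ell, i_\ell)$ and $\nu_\ell + (\beta_1+\dots+\beta_{\ell-1}) \in R_{i_1+\dots+i_\ell}$ with $(\nu_\ell + \beta_1 + \dots + \beta_{\ell-1}, i) \po_0 (\lambda, i)$; this is essentially the $\ell=2$ case applied to the pair $(\beta_1+\dots+\beta_{\ell-1}, i_1+\dots+i_{\ell-1})$ and $(\beta_\ell, i_\ell)$, though one must be careful that $\beta_1+\dots+\beta_{\ell-1}$ is not yet known to lie in $R_{i_1+\dots+i_{\ell-1}}$ — so the argument cannot literally be a black-box recursion. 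Instead I would set up a simultaneous induction or, better, prove the $\ell=2$ statement first as a lemma in a form general enough to feed back in: given $\beta \in R_{a}$, $\gamma \in R_b$ (weights of $\g_a$, $\g_b$), and $(\beta+\gamma, a+b) \po_0 (\lambda, a+b)$ for some $\lambda \in R_{a+b}$, produce $\mu \succeq_0 \beta$ in $R_a$ and $\rho \succeq_0 \gamma$ in $R_b$ with $\mu + \rho \in R_{a+b}$ and $(\mu+\rho, a+b)\po_0 (\lambda, a+b)$. Then the general case follows by applying this to $\big(\sum_{j<\ell}\beta_j\big)$ — but since that sum need not be a weight, I would instead peel off one $\beta_j$ at a time and carry along a running ``certificate'' weight in each slot.

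For the base case $\ell=2$: write $\lambda - \beta - \gamma = \sum_{\alpha\in\Delta_0} c_\alpha \alpha$ with $c_\alpha \in \Z_{\geq 0}$, and induct on $\sum c_\alpha$. If the sum is $0$ then $\lambda = \beta+\gamma$; since $\lambda \in R_{a+b}$ and $\g_{a+b} = [\g_a, \g_b] \cap \g_{a+b}$-ish — more carefully, pick a weight vector $v_\lambda \in \g_{a+b}^{(\lambda)}$; because $\g$ is simple and the grading is by $\Z_n$, $\lambda = \beta + \gamma$ forces $\g_a^{(\beta)} \neq 0$ and $\g_b^{(\gamma)}\neq 0$ with $[\g_a^{(\beta)}, \g_b^{(\gamma)}] \subseteq \g_{a+b}^{(\lambda)}$; the point is simply that $\beta \in R_a$, $\gamma \in R_b$ are given, so $\mu = \beta$, $\rho = \gamma$ works. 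If $\sum c_\alpha > 0$, fix $\alpha$ with $c_\alpha > 0$; then $(\beta,a)+(\gamma,b) \po_0 (\lambda - \alpha, a+b)$, and I claim $\lambda - \alpha \in R_{a+b}$: this is where I use that $\g_{a+b}$ is a $\g_0$-module and $\lambda - (\lambda-\alpha) = \alpha \in Q_0^+$ with $\lambda \in R_{a+b}$ — apply $x_{-\alpha} \in \b_0^-$... wait, $\b_0$ is a fixed Borel and need not contain $x_{-\alpha}$; but $\g_0$ is $\sigma$-invariant and reductive, $\g_{a+b}$ is a $\g_0$-module (since $[\g_0,\g_{a+b}]\subseteq\g_{a+b}$), so lowering operators act, and the weight $\lambda - \alpha$ occurs in the $\g_0$-module $\g_{a+b}$ whenever $\lambda$ does and $\langle\lambda,\alpha^\vee\rangle > 0$, or — if $\langle\lambda,\alpha^\vee\rangle \leq 0$ — one argues via the $\alpha$-root string and the fact that $\lambda$ is not $\alpha$-minimal because $\lambda - \alpha \po_0 \lambda \po_0$ (something larger still a weight)... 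Here is the genuine subtlety, and the step I expect to be the main obstacle: establishing that $\lambda - \alpha \in R_{a+b}$ from $\lambda \in R_{a+b}$, $\alpha \in \Delta_0$, and the knowledge that \emph{some} weight $\beta+\gamma \po_0 \lambda - \alpha$ lies ``below'' in the combined sense. The clean fix is to note that $\g_{a+b}$ is a finite-dimensional module over the semisimple part of $\g_0$, hence a sum of irreducibles, and in any irreducible (or any finite-dim module) the set of weights is saturated: if $\mu$ is a weight and $\mu'$ satisfies $\mu' \po_0 \mu$ and $\mu' \po_0$-dominates the lowest weight (which $\beta+\gamma$ does, as it is itself a weight of the module dominated suitably), then $\mu'$ is a weight. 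I would cite the standard saturation property of weights of finite-dimensional representations (e.g. from Bourbaki or Humphreys) here, reducing the whole proposition to that fact plus the straightforward bookkeeping of the $\Z_n$-grades, which add up correctly by construction.
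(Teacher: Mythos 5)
There is a genuine gap, and it sits exactly at the step you yourself flagged as ``the main obstacle.'' Your plan is to fix an arbitrary $\alpha\in\Delta_0$ with $c_\alpha>0$ in $\lambda-\beta-\gamma=\sum c_\alpha\alpha$ and argue that $\lambda-\alpha\in R_{a+b}$ by a saturation property of weights of finite-dimensional $\g_0$-modules. The saturation statement you would need --- that any $\mu'$ with $\mu'\po_0\lambda$ which also dominates a lower weight must itself be a weight --- is false. For example, in the adjoint representation of $\lie{sl}_3$ the element $\mu'=\alpha_1-\alpha_2$ satisfies $-\theta\po\mu'\po\theta$ and lies in the root lattice, yet it is not a weight; the correct saturation criterion involves the dominant Weyl conjugate of $\mu'$, not dominance-sandwiching. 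Worse, your intended lower bound $\beta+\gamma$ is not known to be a weight of $\g_{a+b}$ at all --- that is precisely what the proposition is trying to produce --- so it cannot serve as a certificate. Your outer induction on $\ell$ has the same unresolved difficulty (partial sums $\beta_1+\dots+\beta_{\ell-1}$ need not be weights), and the proposed ``running certificate in each slot'' is never made precise.

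The paper's proof repairs exactly this point by letting the bilinear form $(\ ,\ )_0$ choose the simple root, in a single induction on $t=\sum c_\alpha$ with all $\ell$ summands treated symmetrically. One computes $(\beta_1+\dots+\beta_\ell,\gamma_1+\dots+\gamma_t)_0$. If it is negative, then $(\beta_r,\gamma_m)_0<0$ for some $r,m$, and the root-string property guarantees $\beta_r+\gamma_m\in R_{i_r}$ (a genuine weight of the $\g_0$-module $\g_{i_r}$), so one raises $\beta_r$ and drops $t$ by one. If it is nonnegative, then $(\lambda,\gamma_1+\dots+\gamma_t)_0=(\beta_1+\dots+\beta_\ell+\gamma_1+\dots+\gamma_t,\gamma_1+\dots+\gamma_t)_0>0$, so $(\lambda,\gamma_q)_0>0$ for some $q$, and then $\lambda-\gamma_q\in R_{i_1+\dots+i_\ell}$ is guaranteed --- the simple root is selected by the positivity of the pairing, not arbitrarily --- and again $t$ drops by one. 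These two root-string facts are the only representation-theoretic input needed; if you replace your arbitrary choice of $\alpha$ and the saturation appeal with this dichotomy, and drop the induction on $\ell$ in favor of induction on $t$, your argument becomes the paper's.
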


\begin{proof} The result is trivial if $\beta_1 + ... + \beta_{\ell} \in R_{i_1 + ... + i_{\ell}}$.

\noindent Assume $\beta_1 + ... + \beta_{\ell} \not\in R_{i_1 + ... + i_{\ell}}$. Since \[\sum_{j=1}^{\ell} (\beta_j,i_j) = \left(\beta_1 + ... + \beta_{\ell}, i_1 + ... + i_{\ell}\right) \po_0 (\lambda, i_1 + ... + i_{\ell}),\] we can write \[\lambda - \sum_{j=1}^{\ell} \beta_j = \sum_{k=1}^t \gamma_k\] for some (not necessarily distinct) $\gamma_k \in \Delta_0, k = 1, ..., t$.

\noindent We prove the result by induction on $t$. 

\noindent Suppose $t = 1$. Consider $(\beta_1+ ... + \beta_{\ell}, \gamma_1)_0$. If $(\beta_1 + ... + \beta_{\ell},\gamma_1)_0 < 0$, then $(\beta_r,\gamma_1) < 0$ for some $r$. Define $\mu_r = \beta_r+\gamma_1$  and $\mu_j = \beta_j, j\neq r$. Then, $\mu_j \in R_{i_j}$ for all $j$, $(\beta_j,i_j) \po_0 (\mu_j,i_j)$ for all $j$, and \[\displaystyle \sum_{j=1}^{\ell} \mu_j = \beta_1 + ... + \beta_{\ell} + \gamma_1 = \lambda \in R_{i_1 + ... + i_{\ell}}.\]

\noindent If $(\beta_1 + ... + \beta_{\ell},\gamma_1)_0 \geq 0$, then $(\beta_1 + ...+\beta_{\ell} + \gamma_1,\gamma_1)_0 = (\lambda,\gamma_1)_0 > 0$, which gives \[\lambda - \gamma_1 = \sum_{j=1}^{\ell} \beta_j \in R_{i_1 + ... + i_{\ell}}.\]

\noindent Now, suppose the result is true for all $s < t$. Consider $(\beta_1 + ... + \beta_{\ell},\gamma_1 + ... + \gamma_t)_0$.

\noindent If $(\beta_1 + ... + \beta_{\ell},\gamma_1 + ... + \gamma_t)_0 < 0$, then $(\beta_r, \gamma_m) < 0$ for some $r,m$. Define $\nu_r = \beta_r + \gamma_m$ and $\nu_j = \beta_j, j \neq r$. Then, $\nu_j \in R_{i_j}$ and $(\beta_j,i_j) \po_0 (\nu_j,i_j)$ for all $j$. Furthermore, $\displaystyle \sum_{j=1}^{\ell} (\nu_j,i_j) \po_0 (\lambda, i_1 + ... + i_j)$. Since $\lambda - \displaystyle \sum_{j=1}^{\ell} \nu_j = \gamma_1 + ... + \gamma_{m-1} + \gamma_{m+1} + ... + \gamma_t$, the result follows by the induction hypothesis.

\noindent If $(\beta_1 + ... + \beta_{\ell},\gamma_1 + ... + \gamma_t)_0 \geq 0$, then \[(\beta_1 + ... + \beta_{\ell} + \gamma_1 + ... + \gamma_t, \gamma_1 + ... + \gamma_t)_0 = (\lambda,\gamma_1 + ... + \gamma_t)_0 > 0.\] For some $q$, $(\lambda,\gamma_q)_0 > 0$, which givens $\lambda - \gamma_q \in R_{i_1 + ... + i_{\ell}}$. Set $\lambda' = \lambda - \gamma_q$. This gives $\displaystyle \sum_{j=1}^{\ell} (\beta_j, i_j) \po_0 (\lambda',i_1 + ... + i_{\ell})$, and \[\lambda' - \sum_{j=1}^{\ell} \beta_j = \gamma_1 + ... + \gamma_{q-1} + \gamma_{q+1} + ... + \gamma_t.\] We now may apply the induction hypothesis to obtain the result.

\end{proof}

\subsection{}

Let $\i$ be a $k$-nilpotent positively-graded $\b_0$-module subalgebra of $\g$. For each $j$, $\i_j := \i\cap\g_j$ has an $\h_0$-weight decomposition, and we denote the set of $\h_0$-weights of $\i_j$ by $\wt_{\h_0}(\i_j)$. Define the set of {\it graded $\h_0$-weights} of $\i$, \[\grwt (\i) := \{(\mu,j) \in\curs{P} \ | \ \mu \in \wt_{\h_0} (\i_j)\}.\] If $0$ is not a weight of $\i_j$ for all $j$, then $\grwt (\i)\subset \Sigma_+$. Let $\curs{A}_{\i}$ denote the set of elements in $\grwt (\i)$ which are minimal with respect to $\po_0$. Then, \[\grwt (\i) = \{(\mu,j) \in \Sigma_+ \ | \ (\nu,j) \po_0 (\mu,j) \mbox{ for some } (\nu,j)\in\curs{A}_{\i}\}.\] We say that an antichain $\curs{A}\subset \Sigma_+$ generates a $k$-nilpotent positively-graded $\b_0$-module subalgebra, $\i$, if $\curs{A} = \curs{A}_{\i}$. Moreover, we will abuse terminology and say that $\curs{A}$ is a {\it $k$-nilpotent positively-graded $\b_0$-module subalgebra antichain of $\Sigma_+$} if $\curs{A} \subset \Sigma_+$ generates a $k$-nilpotent positively-graded $\b_0$-module subalgebra.

\begin{thm}\label{T1} An antichain $\curs{A} \subset \Sigma_+$ is a $k$-nilpotent positively-graded $\b_0$-module subalgebra antichain if and only if the following three properties hold: 

\begin{enumerate} \item[(i)] For any $(\mu_1,j_1), (\mu_2,j_2), ..., (\mu_{k+1},j_{k+1}) \in \curs{A}$, $$(\mu_1,j_1) + (\mu_2,j_2) + ... + (\mu_{k+1},j_{k+1}) \not\po_0 (\lambda, j_1 + ... + j_{k+1})$$ for any highest weight $\lambda$ of $\g_{j_1 + ... + j_{k+1}}$.

\item[(ii)] For any $(\mu_1,j_1), (\mu_2,j_2), ..., (\mu_m,j_m) \in \curs{A}$ with $j_1 + ... + j_m \equiv 0$, $$(\mu_1,j_1) + (\mu_2,j_2) + ... + (\mu_m,j_m) \not\po_0 (\theta, 0)$$ for any highest root $\theta$ for $\g_0$.

\item[(iii)] If $(\mu_1, j_1), (\mu_2,j_2) \in \curs{A}$ and $(\mu_1,j_1) + (\mu_2,j_2) \po_0 (\nu,j_1+j_2)$ for some $(\nu, j_1 + j_2) \in \Sigma_+$, there exists $(\alpha,j_1+j_2) \in \curs{A}$ such that $(\alpha, j_1+j_2) \po_0 (\nu, j_1+j_2)$.

 \end{enumerate}

\end{thm}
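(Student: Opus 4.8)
The plan is to establish the equivalence by translating each of the three combinatorial conditions into a statement about the algebra $\i$ generated by an antichain $\curs{A}$. Given $\curs{A} \subset \Sigma_+$, define $\i_j$ to be the sum of the weight spaces $\g_j^{(\mu)}$ over those $(\mu,j)$ with $(\nu,j)\po_0(\mu,j)$ for some $(\nu,j)\in\curs{A}$, and set $\i = \bigoplus_j \i_j$. By construction $\i$ respects the grading and is $\h_0$-stable; moreover it is automatically $\b_0$-stable, since if $\mu$ is a weight of $\i_j$ and $\gamma\in\Delta_0$ with $\mu+\gamma$ a weight of $\g_j$, then $(\nu,j)\po_0(\mu,j)\po_0(\mu+\gamma,j)$, so $\mu+\gamma$ is also a weight of $\i_j$ and $x_\gamma\cdot\g_j^{(\mu)}\subset\i_j$ (using that non-zero weight spaces are one-dimensional, so $[x_\gamma,\g_j^{(\mu)}]=\g_j^{(\mu+\gamma)}$ whenever $\mu+\gamma\in R_j$). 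Thus the only two things to check are that $\i$ is a \emph{subalgebra} and that it is \emph{$k$-nilpotent}, and conversely that these two properties, together with $\curs{A}=\curs{A}_\i$, force (i)--(iii). I would split the argument accordingly.

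\medskip
\emph{Subalgebra $\Leftrightarrow$ (iii) (with help from (ii)).} Suppose $(\mu_1,j_1),(\mu_2,j_2)\in\grwt(\i)$, say dominated by antichain elements. Then $[\g_{j_1}^{(\mu_1)},\g_{j_2}^{(\mu_2)}]\subseteq\g_{j_1+j_2}^{(\mu_1+\mu_2)}$, which is non-zero only if $\mu_1+\mu_2\in R_{j_1+j_2}$ (or $\mu_1+\mu_2=0$ with $j_1+j_2\equiv0$). Pushing up to antichain generators and invoking \propref{P2}, the bracket lands in $\i$ precisely when every weight $(\nu,j_1+j_2)$ with $(\mu_1,j_1)+(\mu_2,j_2)\po_0(\nu,j_1+j_2)$, $\nu\in R_{j_1+j_2}$, is already dominated by an element of $\curs{A}$ — this is exactly (iii). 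The borderline case $\mu_1+\mu_2=0$ with $j_1+j_2\equiv0$ would produce a non-zero semisimple element of $\i$, contradicting \propref{C1}; ruling this out is what condition (ii) (applied with $m=2$) accomplishes, so (ii) is needed here as well as in the nilpotence step. Conversely, if $\i$ is a subalgebra then iterating brackets and chasing minimal weights via \propref{P2} forces (iii), and \propref{C1} forces (ii) for $m=2$.

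\medskip
\emph{$k$-nilpotent $\Leftrightarrow$ (i) and (ii).} The element $(\ad\i)^k(\i)$ is spanned by iterated brackets $[\g_{j_1}^{(\mu_1)},[\dots,[\g_{j_k}^{(\mu_k)},\g_{j_{k+1}}^{(\mu_{k+1})}]\dots]]$ with each $(\mu_r,j_r)\in\grwt(\i)$; such a bracket lies in the weight space of $\g_{j_1+\dots+j_{k+1}}$ of weight $\mu_1+\dots+\mu_{k+1}$, which is non-zero only if this is a weight of $\g_{j_1+\dots+j_{k+1}}$. If $j_1+\dots+j_{k+1}\not\equiv 0$ this weight must be $\po_0$-below some highest weight $\lambda$ of $\g_{j_1+\dots+j_{k+1}}$; pushing the $(\mu_r,j_r)$ up to antichain elements and using \propref{P2}, the bracket can be non-zero exactly when condition (i) fails. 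If $j_1+\dots+j_{k+1}\equiv0$ the bracket lies in $\g_0$, hence is $\po_0$-below some highest root $\theta$, and a non-zero value corresponds to a failure of (ii) (for $m=k+1$). So (i) and (ii) together are equivalent to $(\ad\i)^k(\i)=0$; note that for the "only if'' direction one must observe that $(\ad\i)^k(\i)=0$ kills \emph{all} such $(k{+}1)$-fold brackets, hence all $m$-fold brackets with $m\ge k+1$ too, giving (ii) for every $m$, and (ii) for $m\le k$ follows since a non-zero bracket of $\le k$ elements summing to grade $0$ already gives a non-zero semisimple element of $\i$, excluded by \propref{C1}.

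\medskip
\emph{Main obstacle.} The technical heart is making the reductions "push the $(\mu_r,j_r)$ up to antichain generators, then apply \propref{P2}'' fully rigorous: a bracket of elements of $\i$ is non-zero iff the corresponding \emph{target} weight space of $\g$ is non-zero \emph{and} the structure constant doesn't vanish, and I must argue that when all the intermediate and final weights involved do lie in the relevant $R_{\bullet}$, the iterated Chevalley bracket of one-dimensional weight spaces is indeed non-zero (this uses simplicity of $\g$ and the fact that these are genuine root vectors of $\g$, not merely $\g_0$-weight vectors). I expect to handle this by reducing a non-vanishing iterated bracket to a single application at a time — at each stage $[x_{\mu_r},\text{(current vector)}]$ is a root vector bracket in the simple Lie algebra $\g$, non-zero whenever the sum of the $\g$-roots is again a root — and then using \propref{P2} purely to control membership of the intermediate $\h_0$-weights in $\grwt(\i)$ so that the whole chain stays inside $\i$. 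Keeping careful track of the distinction between $\g$-roots and $\h_0$-weights of the $\g_j$ throughout is where the care is required.
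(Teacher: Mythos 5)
Your proposal follows essentially the same route as the paper: build $\i_{\curs{A}}$ as the span of the weight spaces lying above the antichain, obtain $k$-nilpotence from (i)--(ii) by pushing the weights of a putative nonzero iterated bracket down to antichain elements, obtain closure under the bracket from (iii) together with \propref{P2}, and run \propref{P2} in reverse for the converse. The non-vanishing of iterated brackets that you single out as the main obstacle is needed only in the converse direction and is left implicit in the paper's proof as well; note only that the $\h_0$-weight vectors of $\g_j$ need not be root vectors of $\g$ when the grading comes from an outer automorphism (e.g.\ $E_1 = x_{\alpha_1}+x_{\alpha_3}$ in the paper's $\Z_4$ example), so that step of your sketch indeed requires the care you anticipate rather than a direct appeal to root strings in $\g$.
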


\begin{proof} Suppose that an antichain $\curs{A}\subset \Sigma_+$ satisfies the three conditions. Let $\lie{i}_{\curs{A}}$ be the vector subspace of $\g_+$ spanned by the $\h_0$-weight vectors in $\g_i$ with weight $\beta$ where $(\alpha,i) \po_0 (\beta,i)$ for some $(\alpha,i) \in \curs{A}$. It is clear that $\lie{i}_{\curs{A}}$ is $\b_0$-stable. For any $k + 1$ weight vectors, $E_{\beta_1}, E_{\beta_2}, ..., E_{\beta_{k+1}}$ in $\lie{i}_{\curs{A}}$, we need to show that 

\[ [E_{\beta_1},[E_{\beta_2},[...,[E_{\beta_k},E_{\beta_{k+1}}]...]]] = 0.\]

\noindent Suppose that $[E_{\beta_1},[E_{\beta_2},[...,[E_{\beta_k},E_{\beta_{k+1}}]...]]] \neq 0$. Then \[(\beta_1 + ... + \beta_{k+1},j_1+...+j_{k+1}) \po_0 (\lambda, j_1+...+j_{k+1})\] for some highest weight $\lambda$ of $\g_{j_1+...+j_{k+1}}$. For each $(\beta_{\ell},j_{\ell})$, $1\leq \ell \leq k+1$, we can choose $(\mu_{\ell},j_{\ell})\in\curs{A}$ such that $(\mu_{\ell},j_{\ell}) \po_0 (\beta_{\ell},j_{\ell})$. Then, $$(\mu_1,j_1) + ... + (\mu_{k+1},j_{k+1}) \po_0 (\beta_1,j_1)+...+(\beta_{k+1},j_{k+1}) \po_0 (\lambda,j_1+...+j_{k+1})$$

\noindent A similar argument shows that $[E_{\eta_1},[E_{\eta_2},[...,[E_{\eta_{m-1}},E_{\eta_m}]...]]] = 0$ for any weight vectors $E_{\eta_{\ell}} \in \g_{j_{\ell}}$ for $\lie{i}_{\curs{A}}$ with $j_1+...+j_m \equiv 0$.

\noindent It remains show that $\lie{i}_{\curs{A}}$ is a Lie subalgebra of $\g_+$. Suppose that $E_{\nu_1}\in\g_{j_1}$ and $E_{\nu_2}\in\g_{j_2}$ are $\h_0$-weight vectors in $\lie{i}_{\curs{A}}$ with $[E_{\nu_1},E_{\nu_2}]\neq 0$. Then, $j_1+j_2 \not\equiv 0$ by the second condition, so $(\nu_1+\nu_2,j_1+j_2) \in \Sigma_+$. Choose $(\mu_{\ell},j_{\ell})\in \curs{A}, \ \ell = 1,2,$ such that $(\mu_{\ell},j_{\ell})\po_0(\nu_{\ell},j_{\ell})$. This gives $(\mu_1,j_1)+(\mu_2,j_2)\po_0 (\nu_1+\nu_2,j_1+j_2)$. Hence, by the third condition, there exists $(\alpha,j_1+j_2) \in \curs{A}$ such that $(\alpha,j_1+j_2)\po_0 (\nu_1+\nu_2,j_1+j_2)$, which shows that $[E_{\nu_1},E_{\nu_2}]\in\lie{i}_{\curs{A}}$ by construction.

\noindent Therefore, $\lie{i}_{\curs{A}}$ is a $k$-nilpotent positively-graded $\b_0$-module subalgebra.

\noindent Now, suppose that $\i \subset \g_+$ is a $k$-nilpotent positively-graded $\b_0$-module subalgebra with $0\not\in \grwt(\i)$. It remains to prove that $\curs{A}_{\i}$ satisfies the given conditions.

\noindent The first two conditions follow immediately from \propref{P2}.

\noindent To show $(iii)$ suppose that $(\mu_1, i_1), (\mu_2,i_2) \in \curs{A}$ and \[(\mu_1,i_1) + (\mu_2,i_2) \po_0 (\nu,i_1+i_2)\] for some $(\nu, i_1 + i_2) \in \Sigma_+$. By \propref{P2}, there exist $\mu'_j \in R_{i_j}$, $j=1,2$, such that $(\mu_j,i_j) \po_0 (\mu'_j,i_j)$, $\mu'_1 + \mu'_2 \in R_{i_1 + i_2}$ and $(\mu'_1 + \mu'_2,i_1 + i_2) \po_0 (\nu,i_1 + i_2)$. Since $\i$ is $\b_0$-stable, it follows that $(\mu'_1 + \mu'_2,i_1 + i_2) \in \grwt(\i)$, and, by definition, there exists $(\alpha,i_1 + i_2) \in \curs{A}_{\i}$ such that $(\alpha, i_1 + i_2) \po_0 (\mu'_1 + \mu'_2, i_1 + i_2) \po_0 (\nu, i_1 + i_2)$.

\end{proof}

\subsection{} 

The statement of the theorem is greatly simplied in the case when $k = 1$ as we note in the following corollary. We use the standard term {\it abelian} in the place of $1$-nilpotent.

\begin{cor}\label{C2} An antichain $\curs{A} \subset \Sigma_+$ is an abelian positively-graded $\b_0$-module subalgebra antichain if and only if for any $(\mu_1,j_1),(\mu_2,j_2) \in \curs{A}$, $$(\mu_1,j_1) + (\mu_2,j_2) \not\po_0 (\lambda,j_1+j_2)$$ for any highest weight $\lambda$ of $\g_{j_1+j_2}$.

\end{cor}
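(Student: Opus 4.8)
The plan is to derive \corref{C2} directly from \thmref{T1} by checking that, when $k = 1$, conditions $(ii)$ and $(iii)$ are each subsumed by condition $(i)$, so that condition $(i)$ alone—which for $k=1$ reads exactly as in the corollary's statement—is necessary and sufficient. Since \thmref{T1} already establishes the equivalence between being an abelian positively-graded $\b_0$-module subalgebra antichain and the conjunction $(i)\wedge(ii)\wedge(iii)$, it suffices to prove $(i) \Rightarrow (ii)$ and $(i) \Rightarrow (iii)$; the reverse implications $(i)\wedge(ii)\wedge(iii) \Rightarrow (i)$ are trivial.

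First I would handle $(i) \Rightarrow (ii)$. Let $(\mu_1,j_1),\dots,(\mu_m,j_m) \in \curs{A}$ with $j_1 + \dots + j_m \equiv 0$, and suppose toward a contradiction that $(\mu_1,j_1) + \dots + (\mu_m,j_m) \po_0 (\theta, 0)$ for some highest root $\theta$ of $\g_0$. With $k = 1$, condition $(i)$ only controls sums of \emph{two} elements of $\curs{A}$, so I cannot apply it to all $m$ summands at once; instead I would first use $(iii)$-type closure, or more directly \propref{P2} applied iteratively, to collapse the length-$m$ chain down to length $2$. Concretely, the cleanest route is: apply condition $(iii)$ repeatedly. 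Wait—since we are only assuming $(i)$, not $(iii)$, I must instead argue that $(ii)$ failing forces $(i)$ to fail. The key observation is that $\theta$, being the highest root of $\g_0 \subset \g_0$, and more to the point $\g_0 = \g_{j_1 + \dots + j_m}$ when $j_1 + \dots + j_m \equiv 0$; but $\theta$ is then a highest \emph{weight} of the grade-zero component, so if only two elements were involved ($m=2$) this is literally an instance of $(i)$. For $m > 2$ I would induct: given a violation of $(ii)$ with $m$ summands, use \propref{P2} with $\lambda = \theta$ to produce $\mu'_j$ with $(\mu_j,j_j)\po_0(\mu'_j,j_j)$ and $\sum \mu'_j \in R_0$ with $(\sum\mu'_j, 0)\po_0(\theta,0)$; then since $(\mu_1,j_1)+(\mu_2,j_2)\po_0(\mu'_1+\mu'_2, j_1+j_2)$ and, provided $j_1 + j_2 \not\equiv 0$, this last weight lies in $\Sigma_+$, I can try to replace the first two summands by a single antichain element dominating $\mu'_1 + \mu'_2$—but that again invokes $(iii)$. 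So the honest structure is: prove $(i)\Rightarrow(iii)$ first, then bootstrap to $(i)\Rightarrow(ii)$ using \propref{P2} plus the now-available $(iii)$ and induction on $m$, with the base case $m = 2$ being a direct instance of $(i)$ (taking $j_1 + j_2 \equiv 0$, $\lambda = \theta$).

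For $(i) \Rightarrow (iii)$: suppose $(\mu_1,j_1),(\mu_2,j_2) \in \curs{A}$ and $(\mu_1,j_1)+(\mu_2,j_2) \po_0 (\nu,j_1+j_2)$ for some $(\nu,j_1+j_2)\in\Sigma_+$. Condition $(i)$ with $k=1$ tells us $(\mu_1,j_1)+(\mu_2,j_2) \not\po_0 (\lambda,j_1+j_2)$ for every highest weight $\lambda$ of $\g_{j_1+j_2}$. Since $\curs{A}$ is assumed to satisfy $(i)$ but we want to extract the conclusion of $(iii)$, the argument should instead go through the actual subalgebra: by the "if" direction of \thmref{T1} we would need all three conditions, so this circular-looking situation means the right statement of the corollary's proof is simply: \emph{observe} that conditions $(ii)$ and $(iii)$ are automatically implied by $(i)$ when $k = 1$, hence $(i)$ alone characterizes abelian antichains. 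Proving $(iii)$ from $(i)$: if $(\mu_1,j_1)+(\mu_2,j_2)\po_0(\nu,j_1+j_2)$ with $(\nu,j_1+j_2)\in\Sigma_+$, then by \propref{P2} there are $\mu'_i\in R_{j_i}$ with $(\mu_i,j_i)\po_0(\mu'_i,j_i)$, $\mu'_1+\mu'_2\in R_{j_1+j_2}$, and $(\mu'_1+\mu'_2,j_1+j_2)\po_0(\nu,j_1+j_2)$; if $\mu'_1+\mu'_2$ were a highest weight we would contradict $(i)$, so it is not, and one raises $\mu'_1+\mu'_2$ by simple roots of $\g_{j_1+j_2}$ toward a highest weight $\lambda$. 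But this only shows $\mu'_1 + \mu'_2$ is strictly below some highest weight, not that an element of $\curs{A}$ dominates it. I think the cleanest correct proof abandons trying to get $(iii)$ abstractly and instead directly re-runs the "if" direction of \thmref{T1}'s proof with $k = 1$: define $\lie{i}_{\curs{A}}$ as before, and the only place the proof used $(iii)$ was to show $\lie{i}_{\curs{A}}$ is closed under bracket; but for $k=1$ (abelian), once we know $[E_{\nu_1},E_{\nu_2}] = 0$ for all weight vectors from $(i)$-via-\propref{P2}, closure under bracket is automatic (the bracket is zero, hence in $\lie{i}_{\curs{A}}$), and condition $(ii)$ was similarly only invoked to rule out nonzero brackets landing in grade $0$, which is again automatic once all brackets vanish.

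Accordingly, the actual proof I would write: \emph{(Necessity.)} If $\curs{A}$ generates an abelian positively-graded $\b_0$-module subalgebra, it satisfies $(i)$, $(ii)$, $(iii)$ of \thmref{T1}, in particular $(i)$, which with $k=1$ is exactly the stated condition. \emph{(Sufficiency.)} Suppose $\curs{A}\subset\Sigma_+$ satisfies $(i)$ with $k=1$. Form $\lie{i}_{\curs{A}}$ as in the proof of \thmref{T1}. It is manifestly $\b_0$-stable. For any two weight vectors $E_{\beta_1}\in\g_{j_1}$, $E_{\beta_2}\in\g_{j_2}$ in $\lie{i}_{\curs{A}}$: if $[E_{\beta_1},E_{\beta_2}]\neq 0$ then $(\beta_1+\beta_2, j_1+j_2)\po_0(\lambda,j_1+j_2)$ for some highest weight $\lambda$ of $\g_{j_1+j_2}$ (taking $\lambda = \theta$ a highest root if $j_1+j_2\equiv 0$), and choosing $(\mu_i,j_i)\in\curs{A}$ with $(\mu_i,j_i)\po_0(\beta_i,j_i)$ gives $(\mu_1,j_1)+(\mu_2,j_2)\po_0(\lambda,j_1+j_2)$, contradicting $(i)$. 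Hence $[\lie{i}_{\curs{A}},\lie{i}_{\curs{A}}] = 0$, so $\lie{i}_{\curs{A}}$ is trivially a subalgebra, is abelian, respects the grading, and has $\curs{A}_{\lie{i}_{\curs{A}}} = \curs{A}$. The main obstacle is purely expository: making sure the case $j_1 + j_2 \equiv 0$ is folded into "$\lambda$ a highest weight of $\g_0$" cleanly—i.e.\ noting that for the zero graded piece the relevant maximal $\h_0$-weights are the highest roots $\theta$ of $\g_0$—so that conditions $(ii)$ and $(iii)$ genuinely need not be stated separately.
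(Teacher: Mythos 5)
Your final argument is correct and matches the intended (unwritten) proof: the paper derives \corref{C2} by specializing \thmref{T1} to $k=1$, where condition $(i)$ alone forces every bracket of weight vectors in $\lie{i}_{\curs{A}}$ to vanish, so closure under bracket and the grade-zero condition $(ii)$ become automatic and conditions $(ii)$ and $(iii)$ need not be imposed separately. The earlier attempts in your write-up to deduce $(ii)$ and $(iii)$ abstractly from $(i)$ are dead ends, but you correctly recognize and discard them before settling on the right argument.
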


\section{Examples}

In this section, we apply our methods to classify all of the abelian positively-graded $\b_0$-module subalgebras for certain cases where $\g$ is of type $A_3$. We let $R^+ = \{\alpha_1, \alpha_2, \alpha_3, \alpha_1 + \alpha_2, \alpha_2+\alpha_3, \alpha_1+\alpha_2+\alpha_3\}$ be the set of positive roots for $\g$, and let $\{x_{\pm\alpha} \ | \ \alpha \in R^+\} \cup \{h_i \ | \ i = 1,2,3\}$ be a set of Chevalley generators for $\g$.

\subsection{$\Z_3$-gradings on $A_3$}

Let $\g$ be of type $A_3$ with the set of positive roots $R^+$ and Chevalley generators as above. Suppose that $n=3$. By \thmref{Kac}, the automorphisms of $\g$ of order $3$, up to conjugacy, are given by $\sigma_{{\bf s}}$ where ${\bf s}$ is one of the three following possibilities: 

\begin{enumerate} 

\item[] \underline{Case 1:} ${\bf s} = (1,1,1,0;1)$ 

\item[] \underline{Case 2:} ${\bf s} = (2,0,1,0;1)$ 

\item[] \underline{Case 3:} ${\bf s} = (0,0,1,2;1)$ 

\end{enumerate}

\subsubsection{Case 1}

Let ${\bf s} = (1,1,1,0;1)$. Then, $\g_0$ is a reductive with $[\g_0,\g_0]$ of type $A_1$ and a $2$-dimensional center. The following elements form a basis of $\g_j$ for $j\in \Z_3$:

\noindent \underline{$\g_0$}: $x_{\alpha_3}$, $h_1$, $h_2$, $h_3$, $x_{-\alpha_3}$

\

\noindent \underline{$\g_1$}: $x_{\alpha_1}$; $x_{\alpha_2+\alpha_3}, x_{\alpha_2}$; $x_{-\alpha_1-\alpha_2}, x_{-\alpha_1-\alpha_2-\alpha_3}$

\

\noindent \underline{$\g_2$}: $x_{-\alpha_1}$; $x_{-\alpha_2}, x_{-\alpha_2-\alpha_3}$; $x_{\alpha_1+\alpha_2+\alpha_3}, x_{\alpha_1+\alpha_2}$

\

\noindent We choose the Borel subalgebra of $\g_0$ to be the subalgebra $\b_0$ spanned by the set $\{x_{-\alpha_3},h_1,h_2,h_3\}$. The set of simple roots for (the semisimple portion of) $\g_0$ is then $\Delta_0 = \{-\alpha_3\}$. This unique simple root is also the highest root of $\g_0$. The sets $R_1$ and $R_2$ of weights of $\g_1$ and $\g_2$, respectively, are given with the following poset structure with respect to $\po_0$:

\[\begin{array}{cll} R_1: (\alpha_1,1); & (\alpha_2+\alpha_3,1) \po_0 (\alpha_2,1); & (-\alpha_1 -\alpha_2,1) \po_0 (-\alpha_1-\alpha_2-\alpha_3,1) \\ \\ R_2: (-\alpha_1,2); & (-\alpha_2,2) \po_0 (-\alpha_2-\alpha_3,2); & (\alpha_1 + \alpha_2 + \alpha_3,2) \po_0 (\alpha_1 + \alpha_2,2)\end{array}\]

\noindent Using \corref{C2}, we find that there are exactly 30 non-empty abelian positively-graded $\b_0$-module subalgebra antichains in $\Sigma_+$, which we list below:

\begin{enumerate}

\item[1.] $\{( \alpha_1, 1) \}$

\item[2.] $\{( \alpha_2, 1)\}$

\item[3.] $\{(\alpha_2+\alpha_3, 1)\}$

\item[4.] $\{(-\alpha_1-\alpha_2-\alpha_3, 1)\}$

\item[5.] $\{(-\alpha_1-\alpha_2, 1)\}$

\item[6.] $\{(-\alpha_1-\alpha_2-\alpha_3, 1), (\alpha_2, 1)\}$

\item[7.] $\{(-\alpha_1, 2)\}$

\item[8.] $\{(-\alpha_2-\alpha_3, 2)\}$

\item[9.] $\{(-\alpha_2, 2)\}$

\item[10.] $\{(\alpha_1+\alpha_2, 2)\}$

\item[11.] $\{(\alpha_1+\alpha_2+\alpha_3, 2)\}$

\item[12.] $\{(\alpha_1+\alpha_2, 2), (-\alpha_2-\alpha_3, 2)\}$

\item[13.] $\{(\alpha_1, 1), (\alpha_1+\alpha_2, 2)\}$

\item[14.] $\{(\alpha_1, 1), (\alpha_1+\alpha_2+\alpha_3, 2)\}$

\item[15.] $\{(\alpha_1, 1), (-\alpha_2-\alpha_3, 2)\}$

\item[16.] $\{(\alpha_1, 1), (-\alpha_2, 2)\}$

\item[17.] $\{(-\alpha_1, 2), (-\alpha_1-\alpha_2-\alpha_3, 1)\}$

\item[18.] $\{(-\alpha_1, 2), (-\alpha_1-\alpha_2, 1)\}$

\item[19.] $\{(-\alpha_1, 2), (\alpha_2, 1)\}$

\item[20.] $\{(-\alpha_1, 2), (\alpha_2+\alpha_3, 1)\}$

\item[21.] $\{(\alpha_2, 1), (\alpha_1+\alpha_2, 2)\}$

\item[22.] $\{(\alpha_2, 1), (\alpha_1+\alpha_2+\alpha_3, 2)\}$

\item[23.] $\{(\alpha_2+\alpha_3, 1), (\alpha_1+\alpha_2, 2)\}$

\item[24.] $\{(\alpha_2+\alpha_3, 1), (\alpha_1+\alpha_2+\alpha_3, 2)\}$

\item[25.] $\{(-\alpha_2-\alpha_3, 2), (-\alpha_1-\alpha_2-\alpha_3, 1)\}$

\item[26.] $\{(-\alpha_2-\alpha_3, 2), (-\alpha_1-\alpha_2, 1)\}$

\item[27.] $\{(-\alpha_2, 2), (-\alpha_1-\alpha_2-\alpha_3, 1)\}$

\item[28.] $\{(-\alpha_2, 2), (-\alpha_1-\alpha_2, 1)\}$

\item[29.] $\{(-\alpha_1-\alpha_2-\alpha_3, 1), (\alpha_2, 1), (-\alpha_1, 2)\}$

\item[30.] $\{(\alpha_1+\alpha_2, 2), (-\alpha_2-\alpha_3, 2), (\alpha_1, 1)\}$

\end{enumerate}

\subsubsection{Case 2}

Let ${\bf s} = (2,0,1,0;1)$. Then, $\g_0$ is a reductive algebra with a one-dimensional center, and $[\g_0,\g_0]$ is of type $A_1\times A_1$. The following elements form a basis of $\g_j$ for $j\in \Z_3$:

\noindent \underline{$\g_0$}: $x_{\alpha_1}$, $x_{\alpha_3}$, $h_1$, $h_2$, $h_3$, $x_{-\alpha_1}$, $x_{-\alpha_3}$

\

\noindent \underline{$\g_1$}: $x_{\alpha_1+\alpha_2+\alpha_3}$, $x_{\alpha_1+\alpha_2}$, $x_{\alpha_2+\alpha_3}$, $x_{\alpha_2}$

\

\noindent \underline{$\g_2$}: $x_{-\alpha_2}$, $x_{-\alpha_1-\alpha_2}$, $x_{-\alpha_2-\alpha_3}$, $x_{-\alpha_1-\alpha_2-\alpha_3}$

\

\noindent We set the Borel subalgebra of $\g_0$ to be the subalgebra $\b_0$ spanned by the set $\{x_{\alpha_1},x_{\alpha_3},h_1,h_2,h_3\}$. The set of simple roots for (the semisimple portion of) $\g_0$ is then $\Delta_0 = \{\alpha_1,\alpha_3\}$. These two roots are the highest roots of $\g_0$. The sets $R_1$ and $R_2$ of weights of $\g_1$ and $\g_2$, respectively, are given with the following poset structure with respect to $\po_0$, where an arrow $(\mu,i) \to (\nu,i)$ indicates that $(\mu,i)\po_0 (\nu,i)$:

\[\begin{array}{ccc} R_1: & & R_2: \\ \begin{tikzpicture}
  \node (P) {$(\alpha_1 + \alpha_2 + \alpha_3,1) $};
  \node (B) [node distance = 1.25cm, below of = P, left of=P] {$(\alpha_1 + \alpha_2,1)$};
  \node (A) [node distance = 1.25cm, below of=P, right of = P] {$(\alpha_2 + \alpha_3,1)$};
  \node (C) [node distance = 1.25cm, below of=B, right of = B] {$(\alpha_2,1)$};
  \draw[->] (C) to node {} (B);
  \draw[->] (C) to node {} (A);
  \draw[->] (A) to node {} (P);
  \draw[->] (B) to node {} (P);
\end{tikzpicture} & \hspace*{.75cm} & \begin{tikzpicture}
  \node (P) {$(-\alpha_2,2)$};
  \node (B) [node distance = 1.25cm, below of = P, left of=P] {$(-\alpha_1-\alpha_2,2)$};
  \node (A) [node distance = 1.25cm, below of=P, right of = P] {$(-\alpha_2 - \alpha_3,2)$};
  \node (C) [node distance = 1.25cm, below of=B, right of = B] {$(-\alpha_1-\alpha_2-\alpha_3,2)$};
  \draw[->] (C) to node {} (B);
  \draw[->] (C) to node {} (A);
  \draw[->] (A) to node {} (P);
  \draw[->] (B) to node {} (P);
\end{tikzpicture}\end{array}\]

\

\noindent Using \corref{C2}, we find that there are exactly 11 non-empty abelian positively-graded $\b_0$-module subalgebra antichains in $\Sigma_+$, which we list below:

\

\begin{enumerate}

\item[1.] $\{(\alpha_2, 1)\}$

\item[2.] $\{(\alpha_1+\alpha_2, 1)\}$

\item[3.] $\{(\alpha_2+\alpha_3, 1)\}$

\item[4.] $\{(\alpha_1+\alpha_2+\alpha_3, 1)\}$

\item[5.] $\{(-\alpha_1-\alpha_2-\alpha_3, 2)\}$

\item[6.] $\{(-\alpha_1-\alpha_2, 2)\}$

\item[7.] $\{(-\alpha_2-\alpha_3, 2)\}$

\item[8.] $\{(-\alpha_2, 2)\}$

\item[9.] $\{(\alpha_1+\alpha_2, 1), (\alpha_2+\alpha_3, 1)\}$

\item[10.] $\{(-\alpha_1-\alpha_2, 2), (-\alpha_2-\alpha_3, 2)\}$

\item[11.] $\{(-\alpha_2, 2), (\alpha_1+\alpha_2+\alpha_3, 1)\}$

\end{enumerate}

\subsubsection{Case 3}

Consider the $\Z_3$-grading on $\g$ given by the automorphism ${\bf s} = (0,0,1,2;1)$. Then, $\g_0$ has a $1$-dimensional center and $[\g_0,\g_0]$ is of type $A_2$. The following elements form a basis of $\g_j$ for $j\in \Z_3$:

\

\noindent \underline{$\g_0$:} $x_{-\alpha_1-\alpha_2-\alpha_3}$, $x_{\alpha_1}$, $x_{-\alpha_2-\alpha_3}$, $h_1$, $h_2$, $h_3$, $x_{\alpha_1+\alpha_2+\alpha_3}$, $x_{-\alpha_1}$ $x_{\alpha_2+\alpha_3}$

\

\noindent \underline{$\g_1$}: $x_{-\alpha_3}$, $x_{\alpha_1+\alpha_2}$, $x_{\alpha_2}$

\

\noindent \underline{$\g_2$}: $x_{-\alpha_2}$, $x_{-\alpha_1-\alpha_2}$, $x_{\alpha_3}$

\

\noindent We set the Borel subalgebra of $\g_0$ to be the subalgebra $\b_0$ spanned by the set $\{x_{-\alpha_1-\alpha_2 - \alpha_3}, x_{\alpha_1}, x_{-\alpha_2 - \alpha_3},h_1,h_2,h_3\}$. The set of simple roots for (the semisimple portion of) $\g_0$ is then $\Delta_0 = \{-\alpha_1-\alpha_2-\alpha_3, \alpha_1\}$. The highest root of $\g_0$ is then $\theta_0 = -\alpha_2-\alpha_3$. The sets $R_1$ and $R_2$ of weights of $\g_1$ and $\g_2$, respectively, are given with the following poset structure with respect to $\po_0$:

\[\begin{array}{cl} R_1: & (\alpha_2,1) \po_0 (\alpha_1+\alpha_2,1) \po_0 (-\alpha_3,1) \\ \\ R_2: & (\alpha_3,2) \po_0 (-\alpha_1-\alpha_2,2) \po_0 (-\alpha_2,2) \end{array}\]

\

\noindent Using \corref{C2}, we find that there are exactly 6 non-empty abelian positively-graded $\b_0$-module subalgebra antichains in $\Sigma_+$, which we list below:

\

\begin{enumerate}

\item[1.] $\{(-\alpha_3, 1)\}$

\item[2.] $\{(\alpha_1+\alpha_2, 1)\}$

\item[3.] $\{(\alpha_2, 1)\}$

\item[4.] $\{(-\alpha_2, 2)\}$

\item[5.] $\{(-\alpha_1-\alpha_2, 2)\}$

\item[6.] $\{(\alpha_3, 2)\}$

\end{enumerate}

\subsection{An outer automorphism} 

Let $\g$ be of type $A_3$ with the Chevalley generators given at the beginning of the section. Define the following elements of $\g$:

\

\noindent $E_0$=$x_{-\alpha_1-\alpha_2}-x_{-\alpha_2-\alpha_3}$

\noindent $E_1$=$x_{\alpha_1}+x_{\alpha_3}$

\noindent $E_2$=$x_{\alpha_2}$

\noindent $F_0$=$-x_{\alpha_1+\alpha_2}+x_{\alpha_2+\alpha_3}$

\noindent $F_1$=$x_{-\alpha_1}+x_{-\alpha_3}$

\noindent $F_2$=$x_{-\alpha_2}$

\noindent $H_1$=$h_1+h_3$

\noindent $H_2$=$h_2$

\noindent $E_{\gamma_1+\gamma_2}=x_{\alpha_1+\alpha_2}+x_{\alpha_2+\alpha_3}$

\noindent $E_{2\gamma_1+\gamma_2}=x_{\alpha_1+\alpha_2+\alpha_3}$

\noindent $E_{\gamma_0+\gamma_2}=x_{-\alpha_1}-x_{-\alpha_3}$

\noindent $E_{\gamma_0+\gamma_1+\gamma_2}=h_1-h_3$

\noindent $F_{\gamma_1+\gamma_2}=x_{-\alpha_1-\alpha_2}+x_{-\alpha_2-\alpha_3}$

\noindent $F_{2\gamma_1+\gamma_2}=x_{-\alpha_1-\alpha_2-\alpha_3}$

\noindent $F_{\gamma_0+\gamma_2}=-x_{\alpha_1}+x_{\alpha_3}$

\

Consider the $\Z_4$-grading on $\g$ given by the automorphism $\sigma_{(1,1,0;2)}$. Here, $\h_0$ is spanned by $H_1$ and $H_2$. The elements $\gamma_{\ell} \in \h_0^*$, $\ell = 0,1,2$, are given by $[h,E_{\ell}] = \gamma_{\ell}(h)E_{\ell}$ for all $h\in\h_0$. Note that $\gamma_0 + \gamma_1 + \gamma_2 = 0$ as elements in $\h_0^*$. For any $\gamma$, $[h,E_{\gamma}] = \gamma(h)E_{\gamma}$ and $[h,F_{\gamma}] = -\gamma(h)F_{\gamma}$ for all $h\in\h_0$.

Using the above elements, the following form a basis of $\g_j$ for $j\in \Z_4$:

\

\noindent \underline{$\g_0$}: $E_1$, $H_1$, $F_1$; $H_2$

\

\noindent \underline{$\g_1$}: $E_0$; $E_{2\gamma_1+\gamma_2}, E_{\gamma_1+\gamma_2}, E_2$

\

\noindent \underline{$\g_2$}: $F_{\gamma_0+\gamma_2}, E_{\gamma_0+\gamma_1+\gamma_2}, E_{\gamma_0+\gamma_2}$

\

\noindent \underline{$\g_3$}: $F_0$; $F_2, F_{\gamma_1+\gamma_2}, F_{2\gamma_1+\gamma_2}$

\

\noindent We set the Borel subalgebra of $\g_0$ to be the subalgebra $\b_0$ spanned by the set $\{E_1,H_1,H_2\}$. The set of simple roots for (the semisimple portion of) $\g_0$ is then $\Delta_0 = \{\gamma_1\}$. This simple root is also the highest root for $\g_0$. The sets $R_1$, $R_2$ and $R_3$ of weights of $\g_1$, $\g_2$ and $\g_3$, respectively, are given with the following poset structure with respect to $\po_0$:

\[\begin{array}{cl} R_1: & (\gamma_0,1); \\ & (\gamma_2,1) \po_0 (\gamma_1+\gamma_2,1) \po_0 (2\gamma_1 + \gamma_2,1) \\ \\ R_2: & (\gamma_0 + \gamma_2,2) \po_0 (\gamma_0 + \gamma_1 + \gamma_2,2) \po_0 (-\gamma_0 - \gamma_2,2) \\ \\ R_3: & (-\gamma_0,3); \\ & (-2\gamma_1-\gamma_2,3) \po_0 (-\gamma_1-\gamma_2,3) \po_0 (-\gamma_2,3)  \end{array}\]

\

\noindent Using \corref{C2}, we find that there are exactly 20 non-empty abelian positively-graded $\b_0$-module subalgebra antichains in $\Sigma_+$, which we list below:

\

\begin{enumerate}

\item[1.] $\{(\gamma_0, 1)\}$

\item[2.] $\{(2\gamma_1+\gamma_2, 1)\}$

\item[3.] $\{(\gamma_1+\gamma_2, 1)\}$

\item[4.] $\{(\gamma_2, 1)\}$

\item[5.] $\{(-\gamma_0-\gamma_2, 2)\}$

\item[6.] $\{(-\gamma_0, 3)\}$

\item[7.] $\{(-\gamma_2, 3)\}$

\item[8.] $\{(-\gamma_1-\gamma_2, 3)\}$

\item[9.] $\{(-2\gamma_1-\gamma_2, 3)\}$

\item[10.] $\{(2\gamma_1+\gamma_2, 1), (-\gamma_0-\gamma_2, 2)\}$

\item[11.] $\{(\gamma_1+\gamma_2, 1), (-\gamma_0-\gamma_2, 2)\}$

\item[12.] $\{(\gamma_0, 1), (-\gamma_2, 3)\}$

\item[13.] $\{(\gamma_0, 1), (-\gamma_1-\gamma_2, 3)\}$

\item[14.] $\{(\gamma_0, 1), (-2\gamma_1-\gamma_2, 3)\}$

\item[15.] $\{(2\gamma_1+\gamma_2, 1), (-\gamma_2, 3)\}$

\item[16.] $\{(-\gamma_0, 3), (2\gamma_1+\gamma_2, 1)\}$

\item[17.] $\{(-\gamma_0, 3), (\gamma_1+\gamma_2, 1)\}$

\item[18.] $\{(-\gamma_0, 3), (\gamma_2, 1)\}$

\item[19.] $\{(-\gamma_0+\gamma_2, 2), (-\gamma_2, 3)\}$

\item[20.] $\{(-\gamma_0+\gamma_2, 2), (-\gamma_1-\gamma_2, 3)\}$

\end{enumerate}

\

\begin{rem} \label{E2} The elements $E_{\gamma_1+\gamma_2}\in \g_1$ and $F_0\in\g_3$ have the same $\h_0$-weight. Let $v = E_{\gamma_1+\gamma_2} + F_0$. The $\b_0$-submodule of $\g_+$ generated by $v$ is an abelian subalgebra of $\g_+$ that is not positively graded. 

\end{rem}

\bibliography{adambib}{}
\bibliographystyle{plain}

\end{document}